\newcommand{\pdfgraphics}{\ifpdf\DeclareGraphicsExtensions{.pdf,.jpg}\else\fi}
\definecolor{hanblue}{rgb}{0.27, 0.42, 0.81}
\definecolor{red}{rgb}{1.0, 0.0, 0.0}
\definecolor{amber}{rgb}{1.0, 0.75, 0.0}
\definecolor{crimson}{rgb}{0.86, 0.08, 0.24}
\definecolor{cornflowerblue}{rgb}{0.39, 0.58, 0.93}
\definecolor{green(munsell)}{rgb}{0.0, 0.66, 0.47}
\theoremstyle{plain}
\newtheorem{teo}{Theorem}[section]
\newtheorem{lemma}[teo]{Lemma}
\newtheorem{prop}[teo]{Proposition}
\newtheorem{cor}[teo]{Corollary}
\theoremstyle{definition}
\newtheorem{defn}[teo]{Definition}
\theoremstyle{remark}
\newtheorem{rem}[teo]{Remark}
\numberwithin{equation}{section}
\newcommand{\de}{\ensuremath{\,\mathrm d}} 
\renewcommand{\d}{\ensuremath{\mathrm d}} 
\newcommand{\st}{\ensuremath{\ :\ }} 
\newcommand{\eqdef}{\ensuremath{\vcentcolon=}}
\renewcommand{\epsilon}{\varepsilon}
\newcommand{\N}{\ensuremath{\mathbb N}}
\newcommand{\Z}{\ensuremath{\mathbb Z}}
\newcommand{\R}{\ensuremath{\mathbb R}}
\DeclarePairedDelimiter\scal{\langle}{\rangle} 
\renewcommand{\div}{{\rm div}\,}
\newcommand{\Ha}{\mathcal{H}}
\newcommand{\tr}{{\rm tr}} 
\newcommand{\res}{\mathbin{\vrule height 1.6ex depth 0pt width
0.13ex\vrule height 0.13ex depth 0pt width 1.3ex}} 
\renewcommand*\env@matrix[1][*\c@MaxMatrixCols c]{%
  \hskip -\arraycolsep
  \let\@ifnextchar\new@ifnextchar
  \array{#1}}
\begin{document}

\pdfgraphics 

\title{Minimizing properties of networks via global and local calibrations}

\author{Alessandra Pluda \footnote{\href{mailto:alessandra.pluda@unipi.it}{alessandra.pluda@unipi.it}, Dipartimento di Matematica, Universit\`a di Pisa, Largo Bruno Pontecorvo 5, 56127 Pisa, Italy.} \and Marco Pozzetta \footnote{\href{mailtto:marco.pozzetta@unina.it}{marco.pozzetta@unina.it}, Dipartimento di Matematica e Applicazioni, Universit\`a di Napoli Federico II, Via Cintia, Monte S. Angelo 80126 Napoli, Italy}}

\date{\today}

\maketitle

\vspace{-0.5cm}
\begin{abstract}
\noindent In this note we prove that minimal networks enjoy minimizing properties for the length functional. A minimal network is, roughly speaking, a subset of $\R^2$ composed of straight segments joining at triple junctions forming angles equal to $\tfrac23 \pi$; in particular such objects are just critical points of the length functional a priori. We show that a minimal network $\Gamma_*$: \emph{i)} minimizes mass among currents with coefficients in an explicit group (independent of $\Gamma_*$) having the same boundary of $\Gamma_*$, \emph{ii)} identifies the interfaces of a partition of a neighborhood of $\Gamma_*$ solving the minimal partition problem among partitions with same boundary traces. Consequences and sharpness of such results are discussed. The proofs reduce to rather simple and direct arguments based on the exhibition of (global or local) calibrations associated to the minimal network.
\end{abstract}

\textbf{Mathematics Subject Classification (2020)}: 
49Q20, 53C38, 49Q15, 49Q05.


\section{Introduction}

The Steiner problem
in its classical formulation (see for instance~\cite{CoRo}) reads as follows:
given $\mathcal{C}$ a collection of $n$ points $p_1,\ldots,p_n$ in the Euclidean plane, one wants
to find a connected set $K$ that contains $\mathcal{C}$ whose length is minimal, namely one looks for
\begin{equation}\label{ste}
\inf  \{\Ha^1(K) : K \subset \R^2, \mbox{ connected and such that } \mathcal{C}
\subset K\}\,.
\end{equation}
This problem has a long history
(for a detailed presentation we refer to~\cite{IvTuz})
and existence of minimizers is known even
in more general ambients, like suitable metric spaces~\cite{PaSt}.
In particular, whenever the problem is set in $\mathbb{R}^n$,
it is well known that minimizers are finite union of segments that meet 
at triple junctions forming angles of $120$
degrees~\cite{IvTuz}. 
Due to their clear relevance in the problem
we give a name to the elements of this precise class of networks: we call them \emph{minimal networks}.
As the number of points of $\mathcal{C}$
increases, 
the number of configurations that are candidate minimizers 
rapidly increase.
Hence, not only minimizers may not be unique, but identifying them is by no means an easy task, also in 
terms of efficient algorithms
(the Steiner problem is classified as NP-hard
from a computational point of view).

A possible tool to  validate the minimality of a certain candidate is the notion of \emph{calibration}.
The classical concept of calibration for a $d$-dimensional oriented manifold
$M$ in $\mathbb{R}^n$ is a closed
$d$-form $\omega$ such that $\vert\omega\vert\leq 1$ and $\langle \omega, v\rangle = 1$
for every tangent vector $v$ to $M$. 
The existence of a calibration $\omega$ for $M$
is a sufficient condition for $M$ to be area minimizing 
in its homology class. Indeed, let $N$ be a 
$d$-dimensional oriented manifold with the same boundary of 
$M$. Using the conditions satisfied
by $\omega$ and Stokes' theorem, we get
\begin{equation*}
\mbox{Vol}(M) = \int_{M} \omega = \int_N \omega \leq \mbox{Vol}(N)\, ,
\end{equation*}
as desired.

The definition of calibration for minimal surfaces does
not work directly for the Steiner problem
because neither the competitors nor the minimizers of the problem 
admit an orientation which is compatible with their boundary.
However there have been several successful adaptations of the notion: starting from the paired calibrations
by Lawlor and Morgan~\cite{lawmor} 
and their suitable generalizations~\cite{CaPl20,CaPl},
passing to calibrations for integral currents with coefficient
in groups~\cite{MaMa,MaOuVe}, rank-one tensor valued measures~\cite{BoOrOu18,BoOrOu21}, and calibrations to study clusters with multiplicities~\cite{Mor}.
We also mention an adaptation of calibrations to an 
evolution setting presented in~\cite{FiHeLaSi}, 
in this case calibrations are useful to show 
weak-strong uniqueness of $BV$ solutions to the 
multi-phase mean curvature flow.

\medskip

The aim of this note is to prove minimizing properties of minimal networks in $\R^2$ via suitable notions of calibrations. We refer to \cref{sec:Networks} for the basic definitions about networks. Roughly speaking, a network is identified by a compact connected graph $G$ with $1$-dimensional edges and by an immersion $\Gamma:G\to \R^2$, see \cref{defgraph}, \cref{network}. A minimal network $\Gamma_*:G\to \R^2$ is a network with straight edges ending either at endpoints or at junctions of order $3$ forming angles equal to $\tfrac23\pi$, see \cref{def:TripleJunctionsRegularMinimal}.
We shall see that minimal networks enjoy minimizing properties for the \emph{length functional}, defined as the sum of the lengths of each edge. Essentially due to the possible presence of loops, minimal networks
may not be strictly stable critical points of the length functional, nevertheless it turns out that they minimize the length functional among several classes of competitors.

\medskip

In~\cite{MaMa}
Marchese and Massaccesi rephrased the Steiner problem as a mass minimization
problem among rectifiable currents with coefficients in a discrete subgroup $\mathcal{G}$ of $\mathbb{R}^{n-1}$
(with $n$ equal to the number of points to connect)
and their reformulation allows for a natural notion of calibration (see~\cite[Section 3]{MaMa}).
In this paper we prove that we can canonically
associate to any minimal network $\Gamma_*$ a current  $\widehat{T}$
with coefficients in a subgroup of $\mathbb{R}^2$, also producing a calibration for $\widehat{T}$ and hence
 showing that $\widehat{T}$ is mass minimizing among normal currents $T$
with coefficients in $\mathbb{R}^2$ such that $\partial T=\partial \widehat{T}$. We refer to \cref{sec:Currents} for definitions and terminology about currents.

\begin{teo}[{cf. \cref{esibizione-calibrazione-correnti}}]\label{global-correnti}
Let $\Gamma_*:G\to \R^2$ be a minimal network. Then there exists a normed subgroup $(\mathcal{G}, \|\cdot\|)$ of
$\mathbb{R}^2$ such that the following holds.

There exists a $1$-rectifiable current $\widehat{T}$ with coefficients in $\mathcal{G}$ such that
$\mathrm{supp}(\widehat{T})=\Gamma_*$, ${\rm L}(\Gamma_*)=\mathbb{M}(\widehat{T})$, and there exists a calibration
$\omega\in C_c^\infty(\mathbb{R}^2,M^{2\times 2}(\mathbb{R}))$
for $\widehat{T}$ in the sense of \cref{calicurrent}.

In particular, $\widehat{T}$ is a mass
minimizing current among $1$-normal currents
with coefficients in $\mathbb{R}^2$ with the same boundary of $\widehat{T}$.
\end{teo}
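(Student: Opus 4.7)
The plan is to construct $\mathcal{G}$, $\widehat{T}$, and $\omega$ directly from the angle condition defining a minimal network, and then conclude mass minimization by a one-line Stokes identity. The key geometric observation is that the $\tfrac{2}{3}\pi$ junction condition forces every edge of a connected component of $\Gamma_*$ to be parallel to exactly one of three unit vectors $v_1, v_2, v_3 \in \mathbb{S}^1$ spaced at $\tfrac{2}{3}\pi$ and hence satisfying $v_1 + v_2 + v_3 = 0$. Indeed, if the outgoing unit tangents at some junction are $\{v_1, v_2, v_3\}$, then at the junction reached by traversing any one of those edges the outgoing tangents are $\{-v_1, -v_2, -v_3\}$, which span the same three lines. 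I would then set $\mathcal{G} \eqdef \Z v_1 + \Z v_2 \subset \R^2$ (which contains $v_3 = -v_1 - v_2$), endowed with the restriction of the Euclidean norm, so that $\|v_k\| = 1$ for $k = 1, 2, 3$.

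Next, I would orient each edge $e$ of $\Gamma_*$ so that its unit tangent $\tau_e$ equals $v_{k(e)}$ for some $k(e) \in \{1, 2, 3\}$, and set
\[
\widehat{T} \eqdef \sum_{e} v_{k(e)} \, [e],
\]
where $[e]$ denotes the oriented $1$-rectifiable current associated to $e$. The identities $\mathrm{supp}(\widehat{T}) = \Gamma_*$ and $\mathbb{M}(\widehat{T}) = \sum_e \|v_{k(e)}\| \, \Ha^1(e) = \mathrm{L}(\Gamma_*)$ are immediate from $\|v_k\| = 1$. At every triple junction the three incident oriented tangents are either all outgoing (equal to $(v_1, v_2, v_3)$) or all incoming, so the three boundary contributions sum to $\pm(v_1 + v_2 + v_3) = 0$; hence $\partial \widehat{T}$ is concentrated on the degree-$1$ vertices of $\Gamma_*$, as desired.

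For the calibration I would take $\omega \eqdef d\phi$, where $\phi\colon \R^2 \to \R^2$ is a smooth, globally $1$-Lipschitz map equal to the identity on a neighborhood $U \supset \Gamma_*$ and identically zero outside some large ball, so that $\omega \in C_c^\infty(\R^2, M^{2 \times 2}(\R))$. To construct such a $\phi$: fix $R$ with $\Gamma_* \subset B_R$; prescribe $\phi = \mathrm{id}$ on $B_R$ and $\phi \equiv 0$ on $\R^2 \setminus B_{2R}$ (a $1$-Lipschitz prescription, since for $x \in B_R$ and $y \notin B_{2R}$ one has $|\phi(x) - \phi(y)| = |x| \leq R \leq |x - y|$); extend by Kirszbraun's theorem to a $1$-Lipschitz map on $\R^2$; and mollify at a small scale, which preserves $1$-Lipschitzness while keeping $\phi$ equal to the identity on a slightly smaller neighborhood still containing $\Gamma_*$. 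Then exactness gives $d\omega = 0$, the $1$-Lipschitz bound gives $\|\omega(x)\|_{\mathrm{op}} \leq 1$ everywhere, which yields the comass inequality $\omega(x)\tau \cdot \theta \leq |\theta|$ for every unit $\tau$ and every $\theta \in \R^2$, and the fact that $\omega = I$ on $\Gamma_*$ gives the tightness $\omega(x) v_{k(e)} \cdot v_{k(e)} = 1 = \|v_{k(e)}\|$ along $\mathrm{supp}(\widehat{T})$.

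With all calibration properties verified, the minimization conclusion follows from the standard identity
\[
\langle \omega, \widehat{T} \rangle - \langle \omega, S \rangle = \langle d\phi, \widehat{T} - S \rangle = \langle \phi, \partial \widehat{T} - \partial S \rangle = 0
\]
for any competitor $S$ with $\partial S = \partial \widehat{T}$, yielding $\mathbb{M}(\widehat{T}) = \langle \omega, \widehat{T} \rangle = \langle \omega, S \rangle \leq \mathbb{M}(S)$. The main technical obstacle I anticipate lies precisely in the calibration construction: producing a compactly supported smooth $\omega$ that is globally contracting in operator norm while reducing exactly to the identity along $\Gamma_*$ forces the transition annulus to be wide enough to accommodate a $1$-Lipschitz interpolation between $\mathrm{id}$ and a constant, which is exactly why the radius $2R$ appears in the construction. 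Everything else reduces to the vector identity $v_1 + v_2 + v_3 = 0$ at junctions and the choice of the Euclidean norm on $\mathcal{G}$.
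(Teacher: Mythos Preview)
Your proposal is correct and follows essentially the same route as the paper: both observe that the $\tfrac{2}{3}\pi$ junction condition forces all edges parallel to three unit vectors summing to zero, assign each edge its own tangent as multiplicity, and calibrate with (locally) the identity matrix. The only notable differences are that the paper equips $\R^2$ with the hexagonal norm whose unit ball has vertices $\pm v_1,\pm v_2,\pm v_3$ rather than the Euclidean one, and glosses over the compact-support issue you handle via the Kirszbraun--mollification cutoff; your Euclidean choice works just as well here since the comass then reduces to the operator norm of $D\phi$.
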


In fact, the group $\mathcal{G}$ in the previous theorem will be explicitly exhibited in the proof and we will see that the calibration turns out to be (identified by) the identity matrix. Also, the group $\mathcal{G}$ will be independent of the topology of the minimal network $\Gamma_*$ and of the number of its endpoints. We mention that an analogous argument has been employed in \cite{Wh3, Mor}, of which \cref{global-correnti} can be seen as a particular case; however the proof of \cref{global-correnti} is obtained here by performing a simpler and more explicit construction.

We will also derive consequences of \cref{global-correnti} regarding minimizing properties of minimal networks among several classes of networks. The length of a minimal network $\Gamma_*:G\to\R^2$ can be proved to be less than the one of suitable networks $\Gamma:H\to \R^2$ having (some) endpoints in common with $\Gamma_*$, possibly immersed edges and junctions of higher order. In particular, comparison networks $\Gamma:H\to \R^2$ may have different topology with respect to $\Gamma_*:G\to\R^2$; more precisely, $G$ can be assumed to be (homeomorphic to) a suitable quotient of $H$, see \cref{cor:SottografiQuozientati1}, or, conversely, $H$ can be assumed to be (homeomorphic to) a suitable quotient of $G$, see \cref{cor:SottografiQuozientati2}. These results give applicable comparison results among classes of networks, yielding explicit applications of the more abstract \cref{global-correnti}.

\medskip

The Steiner problem has also been proved to be
equivalent to the so-called minimal partition problem.
A Caccioppoli partition $\mathbf{E}$ of a bounded open Lipschitz set $\Omega$ is a collection of finite perimeter sets $E_1,\ldots,E_n$
sets that are essentially disjoint and that cover $\Omega$.
Given a reference partition 
$\widetilde{\mathbf{E}}=(\widetilde{E}_1,\ldots,\widetilde{E}_n)$ of $\Omega$ we say that $\mathbf{E}=(E_1,\ldots,E_n)$
is a minimizer of the minimal partition problem
if 
$$
\sum_{i=1}^n P(E_i,\Omega)\leq \sum_{i=1}^n P(F_i,\Omega)
$$
among any partition $\mathbf{F}=(F_1,\ldots,F_n)$
with the property that $\tr_\Omega \chi_{\widetilde{E}_i}=\tr_\Omega \chi_{E_i}=\tr_\Omega \chi_{F_i}$, where the latter symbols denote traces on the boundary of $\Omega$
(see for instance~\cite{AmBr1,AmBr2,MoSo}).

In this paper we show that we can associate to a minimal network $\Gamma_*$ an open set $\Omega$ and a partition $\widetilde{\mathbf{E}}=(\widetilde{E}_1,\widetilde{E}_2,\widetilde{E}_3)$ of $\Omega$ such that $\Gamma_*$ coincides with the set of interfaces of $\widetilde{\mathbf{E}}$ in $\Omega$, also producing a suitable calibration for $\widetilde{\mathbf{E}}$ showing that such partition is a minimizer for the minimal partition problem of $\Omega$ among partitions with the same boundary traces. We refer to Section~\ref{Sec:partition} for definitions and terminology on partitions.

\begin{teo}[{cf. \cref{esibizione-calibrazione}}]\label{local-partizioni}
Let $\Gamma_*:G\to \R^2$ be a minimal network such that $\Gamma_*(G)\subset D$ where $D$ is a domain of class $C^1$ homeomorphic to a closed disk, and $\Gamma_*(G) \cap \partial D$ is the set of endpoints of $\Gamma_*$.

Then there exists a bounded open set $\Omega'\subset \R^2$ such that $\Gamma_*(G)\subset \Omega'$, $\Omega\eqdef \Omega'\cap {\rm int\,}(D)$ has Lipschitz boundary, there exists a Caccioppoli partition $\widetilde{\mathbf{E}}=(\widetilde{E}_1,\widetilde{E}_2,\widetilde{E}_3)$ of $\Omega$ such that $\Omega \cap \cup_i\partial\widetilde{E}_i =\Omega \cap \Gamma_*(G)$ and there exists a local paired calibration for $\widetilde{\mathbf{E}}$ in $\Omega$.

In particular, $\widetilde{\mathbf{E}}$ is a minimizer for $\mathcal{P}$ in $\mathcal{A}$, that is the class of partitions having the same trace of $\widetilde{\mathbf{E}}$ on $\partial\Omega$.
\end{teo}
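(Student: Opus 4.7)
The strategy is to construct $\widetilde{\mathbf E}$ and a local paired calibration for it explicitly, leveraging the fact that the $120^\circ$ angle condition at every triple junction forces the edges of $\Gamma_*$ to split into three parallel direction classes. First I fix three vectors $v_1,v_2,v_3\in\R^2$ at the vertices of an equilateral triangle centered at the origin, so that $v_1+v_2+v_3=0$ and $|v_j-v_i|=1$ for $i\ne j$. Propagating the $120^\circ$ condition along paths of edges, one checks that every edge of $\Gamma_*$ has (unoriented) normal direction in the set $\{v_j-v_i:i<j\}$; equivalently, the edges of $\Gamma_*$ split into three parallel classes, naturally labelled by the three pairs of indices.

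Next I would choose $\Omega'$ as a thin open neighborhood of $\Gamma_*(G)$ whose boundary meets $\partial D$ transversally, so that $\Omega\eqdef\Omega'\cap{\rm int\,}(D)$ has Lipschitz boundary. This is a routine construction, combining small disks at triple junctions, half-disks at each endpoint on $\partial D$, and thin tubular strips along edges, with small enough radii. I then assign a color in $\{1,2,3\}$ to each connected component of $\Omega\setminus\Gamma_*(G)$ so that at each triple junction the three adjacent components carry three distinct colors, and across each edge of class $\{i,j\}$ the two sides carry colors $i$ and $j$; moreover the orientation of the coloring is chosen so that the unit normal from the $i$-component to the $j$-component is precisely $v_j-v_i$. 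The consistency of this coloring relies on the fact that every enclosed region of $\Gamma_*$ is necessarily a hexagon (by the angle-sum relation $120^\circ\,n=(n-2)\,180^\circ$, giving $n=6$): around each such hexagon the six boundary edges alternate between only two of the three classes, producing an alternating pattern of colors for the six surrounding regions that closes up without contradiction. Setting $\widetilde E_i$ equal to the union of the components of color $i$ yields a Caccioppoli partition $\widetilde{\mathbf E}$ of $\Omega$ with $\Omega\cap\bigcup_i\partial\widetilde E_i=\Omega\cap\Gamma_*(G)$.

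For the calibration, define the constant vector fields $\sigma_{ij}\equiv v_j-v_i$ on $\Omega$ for each pair $i<j$. They automatically satisfy $|\sigma_{ij}|=1$, the cocycle identity $\sigma_{ij}+\sigma_{jk}=\sigma_{ik}$, and, by construction of the coloring, the matching $\sigma_{ij}\cdot\nu_{ij}=1$ on every interface $\partial^*\widetilde E_i\cap\partial^*\widetilde E_j$, with $\nu_{ij}$ the unit normal from $\widetilde E_i$ to $\widetilde E_j$. This provides the required local paired calibration for $\widetilde{\mathbf E}$. For the minimality in $\mathcal A$, given any competitor $\mathbf F=(F_1,F_2,F_3)\in\mathcal A$, one applies the divergence theorem to each constant field $v_i$ against $\chi_{F_i}$ and against $\chi_{\widetilde E_i}$; the boundary integrals over $\partial\Omega$ match by the trace equality $\tr_\Omega\chi_{F_i}=\tr_\Omega\chi_{\widetilde E_i}$, while the interior contributions combined with $|v_j-v_i|=1$ give $\sum_i P(F_i,\Omega)\ge\sum_i P(\widetilde E_i,\Omega)$ via the standard paired-calibration inequality.

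I expect the heart of the proof to lie in the global consistency of the $3$-face-coloring above: for an arbitrary cubic plane graph such a coloring need not exist (by Heawood's theorem, which demands all vertex degrees to be even), so the argument genuinely exploits the minimal-network structure — specifically the hexagonal shape of every enclosed region forced by the $120^\circ$ condition — to rule out monodromy in the color propagation. A somewhat delicate but routine step is the treatment of boundary regions of $\Omega$ touching $\partial\Omega$, which are not hexagonal but inherit the coloring from their interior neighbors without further obstruction.
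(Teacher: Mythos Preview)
Your proposal has a genuine gap: the coloring you postulate --- in which every edge of a given parallel class separates the \emph{same} ordered pair of phases --- does not exist in general, and hence globally constant fields $\Phi_i\equiv v_i$ cannot serve as a local paired calibration. Already the simplest minimal network with two triple junctions (five edges, four endpoints) defeats the scheme. The four external edges fall into two parallel classes, each containing two edges. Labelling the four regions of $\Omega\setminus\Gamma_*(G)$ as $A$ (left), $B$ (right), $C$ (top), $D$ (bottom), one parallel class separates $\{A,C\}$ on one edge and $\{B,D\}$ on the other. If both edges are to carry the same ordered color pair with the same oriented normal $v_j-v_i$, you are forced into $A=D$ and $C=B$, contradicting adjacency across the remaining edges. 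Hence no $3$-coloring with your ``class $\leftrightarrow$ ordered pair'' property exists, and the constant field $\Phi_i-\Phi_j$ cannot equal $\nu_{ij}$ on all of $\Sigma_{ij}$. (Your auxiliary claim that the six sides of an enclosed hexagon use only two of the three direction classes is also incorrect: the exterior angle is $60^\circ$, so the sides cycle through all three classes in the pattern $1,2,3,1,2,3$.)

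The paper's proof addresses exactly this obstruction by abandoning constant fields. It still builds the $3$-coloring of the faces (via the observation that the dual graph embeds in the planar triangular lattice, hence is $3$-colorable), but the difference fields $\Psi_{ij}$ are only \emph{piecewise} constant: the neighborhood $\Omega'$ is further subdivided by auxiliary lines through the midpoints of interior edges, at $30^\circ$ to the edge, and on each resulting region $\Psi_{ij}$ is assigned a constant value so that its normal component is continuous across every auxiliary line (whence $\div\Psi_{ij}=0$ distributionally). This is precisely what allows, say, $\Psi_{12}$ to take the value $(\sqrt3/2,-1/2)$ near one junction and $(-\sqrt3/2,-1/2)$ near the other while remaining divergence-free. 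Your argument recovers the correct partition $\widetilde{\mathbf E}$ and the correct minimality mechanism, but is missing this piecewise construction of the calibrating fields, which is the actual content of the proof.
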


In the above theorem, $\Omega$ is essentially constructed as the intersection of a suitably small tubular neighborhood of $\Gamma_*(G)$ with $D$.
This can be interpreted as a \emph{local} minimality result.
In fact, $\Omega$ \emph{cannot} be taken to be an arbitrary neighborhood of the given minimal network $\Gamma_*$. In Remark~\ref{rem:LocalitaRisultatoPartizioni}, we construct a counterexample to the minimality among partitions in case such tubular neighborhood is too large.

To the best of our knowledge, the tool of 
calibrations has never been used before to 
prove \emph{local} minimality in the framework of the Steiner problem or minimal partitions.

\medskip

\textbf{Addendum.} After this work was completed, a result analogous to \cref{local-partizioni} appeared in \cite{FischerHenLauxSimonCalibrazioni}. More precisely, the authors prove that if a Caccioppoli partition of a given set $D$ is a suitable stationary point of the interface length functional, then it is minimizing for the minimal partition problem of $D$ among partitions sufficiently close in $L^1$ to the stationary one. The previous result is based on a calibration argument similar to the one employed in the present work, but the construction of the calibration is different.

\medskip

\textbf{Organization.} We collect below terminology on networks. In \cref{sec:Currents} we introduce calibrations for currents and we prove \cref{global-correnti}. In \cref{Sec:partition} we define the minimal partition problem, local paired calibrations, and we prove \cref{local-partizioni}.

\medskip

\textbf{Acknowledgements.} The authors are partially supported by the INdAM - GNAMPA Project 2022 CUP \_ E55F22000270001 ``Isoperimetric problems: variational and geometric aspects''.
\[
\text{
\emph{Throughout this paper, we do not identify functions coinciding almost everywhere with respect to a measure.}}
\]

\subsection{Networks}\label{sec:Networks}

For a regular curve $\gamma:[0,1]\to \R^2$ of class $H^2$, define
\[
\tau\eqdef\frac{\gamma'}{|\gamma'|},
\quad
\nu\eqdef {\rm R}(\tau),
\]
the tangent and the normal vector, respectively, where ${\rm R}$ denotes counterclockwise rotation of $\tfrac\pi2$. We define $\de s\eqdef |\gamma'| \de x$ the arclength element and $\partial_s\eqdef |\gamma'|^{-1}\partial_x$ the arclength derivative. The curvature of $\gamma$ is the vector $\kappa \eqdef \partial_s^2 \gamma$.

\begin{defn}\label{defgraph}
Fix $N\in\mathbb{N}$ and let $i\in\{1,\ldots, N\}$, $E_i:=[0,1]\times\{i\}$,
$E:=\bigcup_{i=1}^N E_i$ and 
$V:=\bigcup_{i=1}^N \{0,1\}\times\{i\}$.
Let $\sim$ be an equivalence relation that identifies points of $V$.
A \emph{graph} $G$ is the topological quotient space of $E$ induced by $\sim$, that is
\begin{equation*}
G:=E\Big{/}\sim\,,
\end{equation*}
and we assume that $G$ is connected.

Denoting by $\pi:E\to G$ the projection onto the quotient, an \emph{endpoint} is a point $p \in G$ such that $\pi^{-1}(p) \subset V$ and it is a singleton, a \emph{junction} is a point $m \in G$ such that $\pi^{-1}(m) \subset V$ and it is not a singleton. The \emph{order} of a junction if the cardinality $\sharp \pi^{-1}(m)$. We will always assume that the order of a junction in a graph is greater or equal to $3$.

A \emph{subgraph} $G'\subset G$ is a topological subspace of a graph $G$ which is a graph itself with the structure induced by $G$. More precisely, $G'$ is a graph and there exists a subset $E'\subset E$ such that $G'=E'/_\sim$ where $\sim$ is the same equivalence relation defining $G$.
\end{defn}

\begin{defn}\label{network}
An \emph{immersed network} (or, simply, \emph{network}) is a pair $\mathcal{N}=(G,\Gamma)$
where
\begin{equation*}
\Gamma: G\to \mathbb{R}^2
\end{equation*}
is a continuous map and $G$ is a graph
and each map $\gamma^i:=\Gamma_{\vert E_i}$ is
an immersion of class $C^1$ (up to the boundary).  
\end{defn}

\begin{defn}\label{def:TripleJunctions}
A network $\mathcal{N}=(G,\Gamma)$ is an \emph{immersed triple junctions network} if
it is an immersed network and
each junction of $G$ has order $3$.
\end{defn}

\begin{defn}
Let $\mathcal{N}=(G,\Gamma)$ be a network of class $C^1$ and let $e \in \{0,1\}$. The \emph{inner tangent vector} of a regular curve $\gamma^i$ of $\mathcal{N}$ at $e$ is the vector
\[
(-1)^{e} \frac{(\gamma^i)'(e)}{|(\gamma^i)'(e)|}.
\]
\end{defn}

\begin{defn}\label{def:TripleJunctionsRegularMinimal}
A network $\mathcal{N}=(G,\Gamma)$ is 
\emph{minimal} if each map $\gamma^i:=\Gamma_{\vert E_i}$ is an \emph{embedding} of class $H^2$,
for every $i\ne j$ the curves $\gamma^i$
and $\gamma^j$ do not intersect in their interior, and $\pi(0,i)\neq \pi(1,i)$ for any $i$.
Moreover each junction of $G$ has order $3$
and the sum of the inner tangent vectors 
at a junction is zero.
Furthermore
the curvature of the parametrization of each edge is identically zero, i.e., each $\gamma^i$ is the embedding of a straight segment.
\end{defn}

We shall usually denote a network by directly writing the map $\Gamma:G\to \R^2$. Moreover, 
with a little abuse of terminology, we shall employ the words junctions and endpoints also referring to their images in $\R^2$.

\begin{defn}
Given an immersed network $(G,\Gamma)$
we denote by $\ell (\gamma^i)$ the length of the curve $\gamma^i$.
The \emph{length} of the network $\Gamma$ is
\begin{equation*}
L(\Gamma):=\sum_{i=1}^N \ell (\gamma^i)\,.
\end{equation*}
\end{defn}

By computing the first variation of the length
functional, one easily gets two necessary conditions that a network has to satisfy to be a critical point
of $L$: each curve of the network is a straight segment and the inner tangent vectors of the curves meeting at a junction sum up to zero.
Hence minimal networks are critical points of the length
functional. 
However also networks with junctions of order higher than three may happen to be critical points.

\section{Minimality among currents with coefficients in a group}\label{sec:Currents}

In this section we give a summary of the theory of currents
with coefficients in a group presented in a simplified setting that is convenient for our purposes.
For further details we refer for instance 
to~\cite{Fl,Wh1,Wh2,MaMa}.

Let $k\in\{0,1\}$.
Consider $\mathbb{R}^2$ endowed with a norm 
$\Vert\cdot\Vert$ and 
denote by $\Vert\cdot\Vert_{\ast}$ the corresponding dual norm.
We denote by $\Lambda_k(\mathbb{R}^2)$ the vector space of $k$-vectors in $\R^2$. In particular, $\Lambda_0(\R^2)$ coincides with $\R$ and $\Lambda_1(\R^2)$ is just $\R^2$.

\begin{defn}[$k$-covector with values in $\mathbb{R}^2$]
A \emph{$k$-covector with values in $\mathbb{R}^2$} is a linear map $\omega:\Lambda_k(\mathbb{R}^2)\to\mathbb{R}^2$.
We denote by $\Lambda^k_{2}(\mathbb{R}^2)$
the space of $k$-covectors with values in $\mathbb{R}^2$.

We define the \emph{comass} of a covector 
$\omega\in \Lambda^k_{2}(\mathbb{R}^2)$ 
\begin{equation*}
\vert \omega\vert_{com}:=\sup\left\lbrace
\Vert \omega(\tau)\Vert_\ast\,:\;\tau\in \Lambda_k(\mathbb{R}^2)\;\text{with}\, 
\vert \tau\vert\leq1
\right\rbrace\,,
\end{equation*}
where $|\tau|$ is the norm of a $k$-vector with respect to the Euclidean norm.
\end{defn}

Observe that $0$-covectors with values in $\R^2$ are linear maps $\omega:\R\to \R^2$, while $1$-covectors with values in $\R^2$ coincide with linear maps from $\R^2$ to $\R^2$.

We remark that, since $k\in\{0,1\}$ in the previous definition, $\tau \in \Lambda_k(\R^2)$ is automatically a simple $k$-vector, and thus the previous definition of comass coincides with the usual one considered in the theory of currents.

\begin{defn}[$k$-form with values in $\mathbb{R}^2$]
A \emph{$k$-form with values in $\R^2$} is a function $\omega:\R^2 \to \Lambda^k_2(\R^2)$ with compact support such that $x\mapsto \omega(x)(\tau)$ is smooth for any $\tau \in \Lambda_k(\R^2)$. We denote by
$C^\infty_c(\mathbb{R}^2, \Lambda^k_2(\mathbb{R}^2))$ the space of $k$-forms with values in $\mathbb{R}^2$.

The \emph{comass} of $\omega\in C^\infty_c(\mathbb{R}^2, \Lambda^k_2(\mathbb{R}^2))$ is defined by 
\begin{equation*}
\Vert\omega\Vert_{com}:=\sup_{x\in\mathbb{R}^2} \vert\omega(x)\vert_{com}\,.
\end{equation*}

The space $C^\infty_c(\mathbb{R}^2, \Lambda^k_2(\mathbb{R}^2))$ is endowed with the following notion of convergence: we say that $\omega_n \in C^\infty_c(\mathbb{R}^2, \Lambda^k_2(\mathbb{R}^2))$ converges to $\omega \in C^\infty_c(\mathbb{R}^2, \Lambda^k_2(\mathbb{R}^2))$ if there exists a compact set $K\subset \R^2$ such that the support of $\omega_n$ is contained in $K$ for any $n$ and $\omega_n(\cdot)(\tau)$ converges to $\omega(\cdot)(\tau)$ in $C^m(K)$ for any $m$ for any $\tau \in \Lambda_k(\R^2)$.
\end{defn}

A form $\omega \in C^\infty_c(\mathbb{R}^2, \Lambda^k_2(\mathbb{R}^2))$ is identified by a couple $(\omega_1,\omega_2)$ where $\omega_i :\R^2\to \Lambda^k(\R^2)$ is a standard $k$-form with compact support, in the sense that $\omega(x)(\tau) = (\omega_1(x)(\tau) ,\omega_2(x)(\tau) )$ for any $\tau \in \Lambda_k(\R^2)$ and $x \in \R^2$.
We define the differential $d \omega$ of $\omega \in C^\infty_c(\mathbb{R}^2, \Lambda^k_2(\mathbb{R}^2))$ component-wise by $d \omega\eqdef (d \omega_1,d \omega_2)$.

We can now define also $k$-currents with coefficients in $\R^2$.
\begin{defn}[$k$-current with coefficients in $\mathbb{R}^2$]
A \emph{$k$-current with coefficients in $\mathbb{R}^2$}
is a linear map
\begin{equation*}
T:C^\infty_c(\mathbb{R}^2, \Lambda^k_{2}(\mathbb{R}^2))\to \mathbb{R}\,,
\end{equation*}
that is continuous with respect to convergence in $C^\infty_c(\mathbb{R}^2, \Lambda^k_{2}(\mathbb{R}^2))$, i.e., if $\omega_n\to \omega$ with respect to convergence in $C^\infty_c(\mathbb{R}^2, \Lambda^k_{2}(\mathbb{R}^2))$ then $T(\omega_n)\to T(\omega)$.

The \emph{boundary} of a 
$1$-current $T$ with coefficients in $\mathbb{R}^2$  is the $0$-current with coefficients in $\R^2$
defined by
\begin{equation*}
\partial T(\omega):=T(d\omega)\,,\quad \forall \omega \in C^\infty_c(\mathbb{R}^2, \Lambda^0_{2}(\mathbb{R}^2)).
\end{equation*}
Given $T$ a $k$-current with coefficients in $\mathbb{R}^2$, its \emph{mass} is
\begin{equation*}
\mathbb{M}(T):=\sup\left\lbrace T(\omega)\,:\;\omega\in 
C^\infty_c(\mathbb{R}^2, \Lambda^k_{2}(\mathbb{R}^2))
\;\text{with}\,\Vert\omega\Vert_{com}\leq 1
\right\rbrace\,.
\end{equation*}
A $1$-current $T$ with coefficients in $\mathbb{R}^2$ is said to be \emph{normal} if $\mathbb{M}(T)<\infty$ and $\mathbb{M}(\partial T)<\infty$.
\end{defn}

We are now able to define the object of our main interest for our purposes.

\begin{defn}[$1$-rectifiable current with coefficients in $\mathcal{G}$]
Let $\Sigma\subset \R^2$ be a $1$-rectifiable set. An \emph{orientation} $\tau$ on $\Sigma$ is a measurable map $\tau:\Sigma\to \R^2$ such that $\tau(x) \in T_x\Sigma$ and $\vert\tau(x)\vert=1$ for $\Ha^1$-a.e. $x\in \Sigma$. Let $\mathcal{G}$ be a discrete subgroup of $(\R^2,+)$. A \emph{$\mathcal{G}$-valued multiplicity function} $\theta$ on $\Sigma$ is a function in $L^1_{\rm loc}(\Ha^1\res \Sigma;\mathcal{G})$.

A $1$-current $T$ \emph{is rectifiable with coefficients in $\mathcal{G}$}
if there exist a $1$-rectifiable set $\Sigma\subset \R^2$, an orientation $\tau$, and a $\mathcal{G}$-valued multiplicity function $\theta$ on $\Sigma$ such that, recalling that $\R^2\equiv \Lambda_1(\R^2)$, there holds
\begin{equation*}
T(\omega)=\int_{\Sigma} \left\langle
\omega(x)(\tau(x)), \theta(x)\right\rangle\,\mathrm{d}\mathcal{H}^1\,,
\end{equation*}
for any $\omega \in C^\infty_c(\mathbb{R}^2, \Lambda^1_{2}(\mathbb{R}^2))$,  where $\scal{\cdot,\cdot}$ denotes the usual Euclidean scalar product on $\R^2$.

In analogy with the usual $1$-rectifiable currents, 
a $1$-rectifiable current with coefficients in $\mathcal{G}$
will be denoted by the triple $T=[\Sigma, \tau, \theta]$.
\end{defn}

Thanks to the above representation, 
if $T=[\Sigma, \tau, \theta]$ is a 
$1$-rectifiable current with coefficients in $\mathcal{G}$
one can write its mass as 
\begin{equation*}
\mathbb{M}(T)=\int_{\Sigma}\Vert \theta(x)\Vert\,\mathrm{d}\mathcal{H}^1\,.
\end{equation*}

\begin{rem}[Currents with multiplicity $g$ induced by an immersion]\label{rem:BoundaryImmersionSegment}
%
If $\gamma:[0,1]\to \R^2$ is a Lipschitz immersion 
and $g \in \R^2$ is a fixed vector, the immersion induces a $1$-rectifiable current $T=[\gamma([0,1]), \tau,\theta]$ by choosing $\tau$ and $\theta$ as follows. Let $\widetilde\tau(x)\eqdef \sum_{p\in\gamma^{-1}(x)} \gamma'(p)/|\gamma'(p)|$ and $\theta(x)\eqdef |\widetilde\tau(x)|\,g$ for any $x\in \gamma([0,1])$ such that $\gamma$ is differentiable at any $p\in\gamma^{-1}(x)$ with $|\gamma'(p)|>0$, and then $\tau(x)\eqdef \widetilde\tau(x)/|\widetilde\tau(x)|$ for any $x$ such that also $|\widetilde\tau(x)|\neq 0$ ($\tau$ and $\theta$ are defined arbitrarily elsewhere).\\
The area formula immediately yields $T(\omega)=\int_0^1 \scal{\omega(\gamma(t))(\gamma'(t)),g} \de t$ for any $1$-form $\omega$ with values in $\R^2$. In particular $\partial T = g \delta_{\gamma(1)} - g \delta_{\gamma(0)}$.
\end{rem}

As noticed above, the space of $1$-covectors with values in $\R^2$ is the space of linear maps from $\R^2$ to $\R^2$, hence it is isomorphic to the space of matrices $M^{2\times 2}(\R)$. From now on, we shall make use of this identification without further mention 
and we will denote the set of $1$-forms by $C_c^\infty(\mathbb{R}^2,M^{2\times 2}(\mathbb{R}))$.
Therefore, for $\omega \in C_c^\infty(\mathbb{R}^2,M^{2\times 2}(\mathbb{R}))$ we shall write
\begin{equation*}
\omega=
\begin{bmatrix}
\omega_1(x) \\
\omega_2(x) \\
\end{bmatrix}\,,
\end{equation*}
where $\omega_i = (\omega_{i,1},\omega_{i,2}) : \R^2 \rightarrow \R^2$ is smooth with compact support and identifies the standard $1$-form $\omega_{i,1}\d x^1 + \omega_{i,2} \d x^2$.

We recall here the notion of calibrations for 
rectifiable currents with coefficients in a group 
introduced by Marchese and Massaccesi in~\cite{MaMa}. 

\begin{defn}[Calibration for $1$-rectifiable currents]\label{calicurrent}
Let $T=[\Sigma, \tau, \theta]$ be a 
$1$-rectifiable current with coefficients in $\mathcal{G}$ and 
$\omega\in C_c^\infty(\mathbb{R}^2,M^{2\times 2}(\mathbb{R}))$. 
Then 
$\omega$ is a calibration for $T$ if
\begin{itemize}
\item[(i)] $d\omega = 0$;
\item[(ii)] $\|\omega\|_{com} \leq 1$;
\item[(iii)]  $\langle \omega(x)(\tau(x)),\theta(x)\rangle 
= \|\theta(x)\|$ for $\Ha^1$-a.e. $x \in \Sigma$.
\end{itemize}
\end{defn}

We recall from \cite{MaMa} how this notion of calibration for currents implies minimality properties for the mass of the calibrated current current.

For a given set of points in the plane
$p_1,\ldots,p_n\in\mathbb{R}^2$, let $B$ be a
$0$-current of the form 
$$
B:=c_1\delta_{p_1}+\ldots+c_n\delta_{p_n}\quad\text{with}\;c_i\in \mathcal{G}\,,
$$
i.e., $B(f) = \sum_{i=1}^n \scal{c_i, f(p_i)}$ for any $0$-form $f$ with values in $\R^2$.

\begin{rem}\label{rem:BoundarySumZero}
It is easily checked that $B=c_1\delta_{p_1}+\ldots+c_n\delta_{p_n}$ is the boundary of a $1$-rectifiable current with bounded support with coefficients in $\mathcal{G}$ if and only if $\sum_{i=1}^n c_i=0$.

Indeed, if $B=\partial T$, let $\omega=v \chi(x)$ be the $0$-form with values in $\R^2$ where $v \in \R^2$ is fixed and $\chi \in C^\infty_c(\R^2)$ equals $1$ in a neighborhood of the support of $T$; hence $0=T(d\omega)=B(\omega)= \scal{v, \sum_{i=1}^n c_i}$. Arbitrariness of $v$ implies $\sum_{i=1}^n c_i=0$. On the other hand, if $\sum_{i=1}^n c_i=0$, let $q \neq p_i$ for any $i$; hence it suffices to take $T=\sum_{i=1}^n T_i$ where $T_i$ is the current induced by a $C^1$ embedding $\gamma_i:[0,1]\to \R^2$ from $q$ to $p_i$ 
endowed with multiplicity $c_i$.
\end{rem}

We define the 
classes
\begin{align*}
    \mathcal{C}_1:=\left\lbrace T\st T\mbox{ is a }  1\text{-rectifiable 
currents with coefficients in }\mathcal{G}, \ \partial T=B\right\rbrace\,.\\
    \mathcal{C}_2:=\left\lbrace T\st T\mbox{ is a }  1\text{-normal
currents with coefficients in }\mathbb{R}^2, \ \partial T=B\right\rbrace\,.
\end{align*}
Obviously $\mathcal{C}_1\subset\mathcal{C}_2$.

In case $\mathcal{C}_1$ is nonempty and contains a current $\widehat{T}$ with a calibration $\omega$, then $\widehat{T}$ solves a mass minimization problem as stated in the next proposition.

\begin{prop}[{\cite[Proposition 3.2]{MaMa}}]
In the notation above, suppose that $\omega\in C_c^\infty(\mathbb{R}^2,M^{2\times 2}(\mathbb{R}))$ is a calibration
for some $\widehat{T}\in\mathcal{C}_1$. Then 
\begin{equation*}
    \mathbb{M}(\widehat{T})\leq \mathbb{M}(T)
\end{equation*}
for every $T\in\mathcal{C}_2$.
\end{prop}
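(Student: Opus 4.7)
The argument follows the classical three-line calibration computation, adapted to the group-coefficient setting.

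\emph{First step (lower bound becomes equality for $\widehat{T}$).} Using the rectifiable representation $\widehat{T}=[\Sigma,\tau,\theta]$, the definition of the action of a $1$-rectifiable current with coefficients in $\mathcal{G}$, and condition (iii) of \cref{calicurrent}, I would compute directly
\[
\widehat{T}(\omega)=\int_{\Sigma}\langle \omega(x)(\tau(x)),\theta(x)\rangle\,\mathrm{d}\mathcal{H}^1=\int_{\Sigma}\|\theta(x)\|\,\mathrm{d}\mathcal{H}^1=\mathbb{M}(\widehat{T}).
\]

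\emph{Second step (upper bound for any competitor).} For any $T\in\mathcal{C}_2$, the very definition of mass of a current with coefficients in $\mathbb{R}^2$ together with condition (ii) gives the pointwise-by-duality estimate
\[
T(\omega)\leq \|\omega\|_{com}\,\mathbb{M}(T)\leq \mathbb{M}(T).
\]

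\emph{Third step (closedness yields $T(\omega)=\widehat{T}(\omega)$).} This is the step requiring some care. Since $\partial(\widehat{T}-T)=0$ and $d\omega=0$, I want a Stokes-type identity. Writing componentwise $\omega=(\omega_1,\omega_2)$ with each $\omega_i$ a closed smooth compactly supported standard $1$-form on $\mathbb{R}^2$, the Poincar\'e lemma on a large open ball $B_R$ containing $\mathrm{supp}(\omega)\cup\mathrm{supp}(\widehat{T})\cup\mathrm{supp}(T)$ produces smooth primitives $\eta_i$ with $\mathrm{d}\eta_i=\omega_i$ on $B_R$. Multiplying by a cut-off $\chi\in C^\infty_c(B_R)$ equal to $1$ in a neighborhood of $\mathrm{supp}(\widehat{T})\cup\mathrm{supp}(T)$ yields a compactly supported $0$-form $\eta\eqdef(\chi\eta_1,\chi\eta_2)$ with values in $\mathbb{R}^2$ satisfying $d\eta=\omega$ on a neighborhood of the supports of both currents. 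Hence
\[
(\widehat{T}-T)(\omega)=(\widehat{T}-T)(d\eta)=\partial(\widehat{T}-T)(\eta)=0.
\]
Chaining the three displays gives $\mathbb{M}(\widehat{T})=\widehat{T}(\omega)=T(\omega)\leq \mathbb{M}(T)$, which is the claim.

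The only genuine obstacle is the third step: it requires both $\widehat{T}$ and $T$ to have bounded support, so that the primitive $\eta$ can be legitimately truncated to a compactly supported test $0$-form without changing the action of the currents. For $\widehat{T}\in\mathcal{C}_1$ this is built in; for $T\in\mathcal{C}_2$ it follows either from the standard structure of $1$-normal currents with compactly supported boundary $B$, or by restricting a priori to competitors with bounded support (the infimum of $\mathbb{M}$ in $\mathcal{C}_2$ is unchanged under such a restriction). Granting this, the remaining manipulations are routine and the inequality follows.
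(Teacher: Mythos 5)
Your three-step scheme is exactly the classical calibration computation, and it is the argument of \cite[Proposition 3.2]{MaMa}; the paper itself does not reprove this statement but only cites it, so there is no ``paper proof'' to diverge from. Steps 1 and 2 are correct as written: condition (iii) together with the representation $\widehat{T}(\omega)=\int_\Sigma\langle\omega(x)(\tau(x)),\theta(x)\rangle\,\mathrm{d}\Ha^1$ and the formula $\mathbb{M}(\widehat{T})=\int_\Sigma\|\theta\|\,\mathrm{d}\Ha^1$ give $\widehat{T}(\omega)=\mathbb{M}(\widehat{T})$, and condition (ii) with the definition of mass as a supremum gives $T(\omega)\le\mathbb{M}(T)$.

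The weak point is your justification of Step 3. The cut-off construction needs $\mathrm{supp}(T)$ to be bounded, and neither of your two escape routes is sound as stated. A $1$-normal current with compactly supported (even empty) boundary need \emph{not} have bounded support: take $T=\sum_k 2^{-k}\,T_k$ with $T_k$ the current of a unit-length loop centred at $(k,0)$; it is normal, $\partial T=0$, but the support is unbounded. And restricting a priori to bounded-support competitors proves only the inequality for those competitors, whereas the proposition asserts it for \emph{every} $T\in\mathcal{C}_2$; upgrading this would require a separate approximation argument. The correct fix is to dispense with the cut-off altogether: since $H^1_c(\R^2)=0$, every closed compactly supported $1$-form on $\R^2$ admits a \emph{compactly supported} smooth primitive. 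Concretely, set $\eta_i(x,y)\eqdef\int_{-\infty}^x\omega_{i,1}(t,y)\,\mathrm{d}t$; closedness of $\omega_i$ gives $\mathrm{d}\eta_i=\omega_i$ on all of $\R^2$, and the same closedness shows $\int_\R\omega_{i,1}(t,y)\,\mathrm{d}t$ is independent of $y$, hence zero, so $\eta_i$ vanishes outside a compact set. With $\eta\eqdef(\eta_1,\eta_2)\in C^\infty_c(\R^2,\Lambda^0_2(\R^2))$ and $d\eta=\omega$ everywhere, one gets $(\widehat{T}-T)(\omega)=(\widehat{T}-T)(d\eta)=\partial(\widehat{T}-T)(\eta)=B(\eta)-B(\eta)=0$ with no hypothesis whatsoever on the supports of the currents. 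With this replacement your chain $\mathbb{M}(\widehat{T})=\widehat{T}(\omega)=T(\omega)\le\mathbb{M}(T)$ is complete.
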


\medskip
From now on, let $\{e_1,e_2\}$ be the canonical basis of $\mathbb{R}^2$.
We define $g_1:=e_1$, $g_2:=(-\frac{1}{2},-\frac{\sqrt{3}}{2})$ and 
$g_3:=-g_1-g_2$.
We choose a norm $\Vert \cdot\Vert$ on $\mathbb{R}^2$ such that $\Vert g_1\Vert=\Vert g_2\Vert=\Vert g_1+g_2\Vert=1$, the unit ball with respect to $\Vert \cdot\Vert$ is the regular hexagon in \cref{fig:norm}, and $\Vert \cdot\Vert$ is defined on the rest of $\R^2$ by homogeneity. We define $\mathcal{G}$
to be the discrete group generated by $g_1$ and $g_2$ with respect to addition.

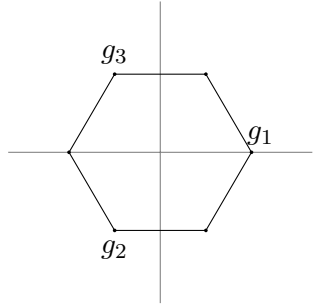
\begin{figure}[H]
    \centering
\begin{tikzpicture}[scale=0.4]
\draw[black!50!white]
(0,-5)--(0,5)
(-5,0)--(5,0);
\draw[black]
(3,0)--(1.5, 2.59)--(-1.5, 2.59)
--(-3,0)--(-1.5, -2.59)--(1.5, -2.59)--(3,0);
\fill[black](-1.5, -2.59)circle (1.7pt);
\fill[black](-1.5, 2.59)circle (1.7pt);
\fill[black](3,0)circle (1.7pt);
\fill[black](-3,0)circle (1.7pt);
\fill[black](1.5, 2.59)circle (1.7pt);
\fill[black](1.5, -2.59)circle (1.7pt);
\path
(3.3, -0.1)node[above]{$g_1$}
(-1.5, -2.59)node[below]{$g_2$}
(-1.5, 2.59)node[above]{$g_3$};
\end{tikzpicture}
    \caption{The unit ball in the norm $\Vert\cdot\Vert$.}
    \label{fig:norm}
\end{figure}

\begin{teo}\label{esibizione-calibrazione-correnti}
Let $\Gamma_*:G\to \R^2$ be a minimal network
with endpoints $p_1,\ldots,p_n \in \R^2$. Let $\mathcal{G}$, $g_1,g_2,g_3$, and $\|\cdot\|$ be as above.

Then there exists a $1$-rectifiable current $\widehat{T}$ with coefficients in $\mathcal{G}$ with boundary $B\eqdef\partial\widehat{T}=c_1\delta_{p_1}+\ldots+c_n\delta_{p_n}$
such that $c_i\in\{\pm g_1,\pm g_2, \pm g_3\}$,
$\mathrm{supp}(\widehat{T})=\Gamma_*$, ${\rm L}(\Gamma_*)=\mathbb{M}(\widehat{T})$, and there exists a calibration
$\omega\in C_c^\infty(\mathbb{R}^2,M^{2\times 2}(\mathbb{R}))$
for $\widehat{T}$.

In particular, $\widehat{T}$ is a mass
minimizing current among $1$-normal currents
with coefficients in $\mathbb{R}^2$ with boundary
$B$.
\end{teo}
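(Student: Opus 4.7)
The plan is to build the current $\widehat T$ edge by edge with multiplicities adapted to the segment directions, and to use the constant identity matrix (cut off to compact support) as calibration.

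The key preliminary geometric fact is that, after applying a rotation to $\Gamma_*$ (which preserves minimality and length), every edge of $\Gamma_*$ has unit direction in the six-element set $\{\pm g_1,\pm g_2,\pm g_3\}$. Indeed, fix any edge and its direction $v$. At each of its endpoints one either sits at an endpoint of $\Gamma_*$ or at a triple junction where by \cref{def:TripleJunctionsRegularMinimal} the other two inner tangents are $R_{\pm 2\pi/3}(-v)$; since the adjacent edges are themselves straight, this determines their directions as $\pm R_{\pm 2\pi/3}v$. Iterating and using that $G$ is connected, all edge directions lie in the $R_{2\pi/3}$-orbit of $v$; rotating so that $v=g_1$ puts this orbit at $\{\pm g_1,\pm g_2,\pm g_3\}$.

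Next, I would orient each edge $\gamma^i$ so that its unit tangent $\tau^i$ coincides with one of $g_1,g_2,g_3$, call it $g_{k_i}$, and assign it the constant multiplicity $\theta^i\equiv g_{k_i}\in\mathcal G$. Define
\[
\widehat T \eqdef \sum_{i=1}^N \bigl[\gamma^i([0,1]),\tau^i,g_{k_i}\bigr].
\]
By construction $\mathrm{supp}(\widehat T)=\Gamma_*$, and since $\|g_{k_i}\|=1$ one has $\mathbb{M}(\widehat T)=\sum_i \ell(\gamma^i)=L(\Gamma_*)$. Using \cref{rem:BoundaryImmersionSegment} the boundary is $\partial\widehat T=\sum_i (g_{k_i}\delta_{\gamma^i(1)}-g_{k_i}\delta_{\gamma^i(0)})$; at a triple junction $m$ a direct sign check shows that each incident edge $j$ contributes exactly $-T_j\delta_m$, where $T_j$ is the inner tangent of the $j$-th edge at $m$ (flipping parametrization flips both $\tau^i$ and the sign in front of the delta). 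The junction condition $T_1+T_2+T_3=0$ then gives vanishing contribution at $m$, so $\partial\widehat T$ is supported only at $p_1,\dots,p_n$ with coefficients $c_i\in\{\pm g_1,\pm g_2,\pm g_3\}$, as required.

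For the calibration I would take $\omega(x)\equiv I$, the constant identity-matrix-valued form. The identities $d\omega=0$ and $\scal{\omega(x)(\tau^i),g_{k_i}}=\scal{g_{k_i},g_{k_i}}=1=\|g_{k_i}\|$ are immediate, and the comass bound $\|\omega\|_{com}=\sup_{|\tau|\leq 1}\|\tau\|_*\leq 1$ is precisely the statement that the Euclidean unit disk is contained in the dual unit ball of $\|\cdot\|$; since the unit ball pictured in \cref{fig:norm} has inradius $\sqrt{3}/2$ and circumradius $1$, its polar is a hexagon of inradius $1$, and the inclusion is clear. To meet the compact support requirement of \cref{calicurrent}, I replace $I$ by $\chi(x)I$ with $\chi\in C_c^\infty(\R^2)$ identically $1$ on a large closed convex set $K$ containing $\Gamma_*$, after observing that, by $1$-Lipschitz nearest-point retraction onto $K$, competitor normal currents may be assumed to have support in $K$, so that $d\omega=0$ and the comass bound need only hold on $K$ (where $\chi\equiv 1$). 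Applying Proposition~3.2 of~\cite{MaMa} to $\widehat T$ and $\omega$ yields the mass-minimality of $\widehat T$ among $\mathcal C_2$. The main obstacle is the opening geometric step, i.e., the propagation argument that reduces all edge directions to a single $R_{2\pi/3}$-orbit; once this is granted, all remaining ingredients (the boundary bookkeeping, the mass identity, and the hexagon duality behind the comass bound) are direct verifications.
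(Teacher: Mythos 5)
Your proposal is correct and follows essentially the same route as the paper's proof: rotate so that all edges are parallel to $g_1,g_2,g_3$, take the current with constant multiplicity $g_i$ on edges parallel to $g_i$ (so that no boundary is created at triple junctions by the $\tfrac23\pi$ condition), and calibrate with the identity matrix, whose comass bound amounts to testing against the extreme points $\pm g_1,\pm g_2,\pm g_3$ of the hexagonal unit ball. Your treatment is in fact slightly more careful than the paper's on two points it leaves implicit — the propagation argument showing all edge directions lie in a single $R_{2\pi/3}$-orbit, and the cutoff/retraction needed to reconcile the non-compactly-supported identity form with the definition of calibration in \cref{calicurrent}.
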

\begin{proof}
Up to 
translations we can fix one of the endpoints of 
$\Gamma_*$ to be the origin of $\mathbb{R}^2$
and we can rotate
$\Gamma_*$ so that all its straight edges are parallel to either $g_1$, $g_2$, or $g_3$.
Then the desired $1$-rectifiable current $\widehat{T}=[\Sigma,\tau,\theta]$
with coefficients in $\mathcal{G}$ is defined as follows: $\Sigma=\Gamma_*(G)$ is the $1$-rectifiable set,
the orientation $\tau$
and the multiplicity $\theta$ are constant 
in the interior of each straight segment and we set $\tau(x)=\theta(x)=g_i$ if and only if $x$ is an interior point of a straight edge of $\Gamma_*$ parallel to $g_i$ ($\tau$ and $\theta$ are defined arbitrarily at endpoints and junctions).

It is immediately checked that $B\eqdef \partial \widehat{T}$ has the desired form, as no boundary is generated at triple junctions. Moreover $\mathrm{supp}(\widehat{T})=\Gamma_*$ and ${\rm L}(\Gamma_*)=\mathbb{M}(\widehat{T})$.

Finally we claim that the identity matrix 
$$
\omega=\begin{bmatrix}
1 & 0 \\
0 & 1\\
\end{bmatrix},
$$
identifying a $1$-form in $C_c^\infty(\mathbb{R}^2,M^{2\times 2}(\mathbb{R}))$, is a calibration for $\widehat{T}$. Indeed $d \omega=0$ trivially. Next, to show that $\|\omega\|_{com}\le 1$ we notice that the unit ball 
of the norm of the group is convex and that its
extreme points are $\pm g_1,\pm g_2,\pm g_3$ and hence it is sufficient
to estimate $\left\langle\omega (v(x)),\cdot\right\rangle$ against $\pm g_1,\pm g_2,\pm g_3$, where $v(x)$
is generic unit vector $v(x)=(\cos\alpha(x),\sin\alpha(x))$. So we have
 
 \begin{align*}
  \vert \left\langle\omega ( v(x)), g_1\right\rangle \vert &=  
  \bigg\vert
   \left\langle 
\begin{bmatrix}
1 & 0 \\
0 & 1\\
\end{bmatrix}
\begin{bmatrix}
\cos\alpha(x) \\
\sin\alpha(x) \\
\end{bmatrix},\begin{bmatrix}
 1  \\
0 \\
\end{bmatrix}\right\rangle \bigg\vert =
\vert  \cos\alpha(x)\vert\leq 1\,,
\end{align*}
and similarly $\vert \left\langle\omega ( v(x)),g_2\right\rangle \vert = \vert \sin(\alpha(x)+\pi/6)\vert \leq 1$, and $\vert    \left\langle\omega (v(x)),g_3\right\rangle\vert =\vert \sin(\alpha(x)-\pi/6) \vert \leq 1$.

To conclude we check that $\scal{\omega(\tau(x)),\theta(x)}=\|\theta(x)\|$ $\Ha^1$-a.e. on $\Sigma$.  But this is immediate as 
$$
\scal{\omega(\tau(x)),\theta(x)}=\scal{\tau(x),\theta(x)}= \scal{\theta(x),\theta(x)}= 1 = \|\theta(x)\|
$$ by definition of the orientation and of the norm $\|\cdot\|$.
\end{proof}

Using \cref{esibizione-calibrazione-correnti} we directly derive a proof of the fact that minimal networks minimize length among competitor networks having the same topology.

\begin{cor}\label{stessa-topologia}
Let $\Gamma_*:G\to \R^2$ be a minimal network. Then ${\rm L}(\Gamma_*)\le {\rm L}(\Gamma)$ for any immersed triple junctions network $\Gamma:G\to \R^2$ having the same endpoints of $\Gamma_*$.
\end{cor}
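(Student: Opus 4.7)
The idea is to apply \cref{esibizione-calibrazione-correnti} to $\Gamma_*$ and then build, starting from the competitor $\Gamma$, a normal current $T$ with the same boundary $B=\partial\widehat{T}$ as the calibrated current $\widehat{T}$ but with mass at most $L(\Gamma)$. Combined with the mass minimality of $\widehat{T}$ among $1$-normal currents with coefficients in $\R^2$ and boundary $B$, this will immediately yield $L(\Gamma_*)=\mathbb{M}(\widehat{T})\le \mathbb{M}(T)\le L(\Gamma)$.

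Concretely, first I would perform the same rigid motion used in the proof of \cref{esibizione-calibrazione-correnti}, so that each edge $\Gamma_*(E_i)$ is parallel to one of $g_1,g_2,g_3$, and pick the associated constant multiplicity $g_{j(i)}\in\{g_1,g_2,g_3\}$ on $E_i$. Since the competitor $\Gamma:G\to\R^2$ has the same underlying graph and the same image endpoints as $\Gamma_*$, I define
\[
T\eqdef\sum_{i=1}^N T_i,
\]
where $T_i$ is the $1$-rectifiable current with coefficients in $\mathcal{G}$ induced by the immersion $\gamma^i=\Gamma_{|E_i}$ endowed with constant multiplicity $g_{j(i)}$, as in \cref{rem:BoundaryImmersionSegment}.

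For the boundary, by \cref{rem:BoundaryImmersionSegment} each $\partial T_i$ equals $g_{j(i)}(\delta_{\gamma^i(1)}-\delta_{\gamma^i(0)})$. The bookkeeping of signed multiplicities $\pm g_{j(i)}$ attached to each vertex of $G$ depends only on the combinatorial data $(G,\{g_{j(i)}\}_{i=1}^N)$, which is identical to the one appearing in the construction of $\widehat{T}$. Hence at every triple junction of $G$ the three boundary contributions sum to $\pm g_1\pm g_2\pm g_3=0$ exactly as they do for $\widehat{T}$, while at each endpoint $p_k$ (shared by $\Gamma$ and $\Gamma_*$) the contribution matches that of $\partial\widehat{T}$. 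Therefore $\partial T=B$, and $T$ belongs to $\mathcal{C}_2$.

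For the mass bound, since $\|g_{j(i)}\|=1$, the description of $T_i$ provided in \cref{rem:BoundaryImmersionSegment} together with the area formula yields
\[
\mathbb{M}(T_i)=\int_{\gamma^i(E_i)}|\widetilde\tau(x)|\,\|g_{j(i)}\|\,\de\mathcal{H}^1 \le \int_{\gamma^i(E_i)}\#(\gamma^i)^{-1}(x)\,\de\mathcal{H}^1 = \ell(\gamma^i),
\]
so that $\mathbb{M}(T)\le\sum_i\ell(\gamma^i)=L(\Gamma)$ by subadditivity of the mass. The only mildly delicate step I anticipate is the boundary cancellation at triple junctions: this is a purely combinatorial consequence of the fact that the signs $\pm g_{j(i)}$ assigned to the vertices of $G$ coincide with those already used in the construction of $\widehat{T}$, so no new verification beyond the one implicit in the proof of \cref{esibizione-calibrazione-correnti} is needed.
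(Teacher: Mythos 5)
Your proposal is correct and follows essentially the same route as the paper: both build the comparison current $T=\sum_i T_i$ from the edges of $\Gamma$ with the constant multiplicities $g_{j(i)}$ inherited from $\widehat{T}$, check $\partial T=\partial\widehat{T}$ by the shared combinatorics of $G$, and bound $\mathbb{M}(T)\le L(\Gamma)$ via the multiplicity estimate from \cref{rem:BoundaryImmersionSegment}. The only detail the paper makes explicit that you leave implicit is the preliminary reorientation of the edges of $G$ so that the parametrizations $\Gamma_*|_{E_i}$ induce the orientation of $\widehat{T}$, which is what guarantees the boundary bookkeeping for $T$ literally matches that of $\widehat{T}$.
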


\begin{proof}
Let $\widehat{T}$ be given by \cref{esibizione-calibrazione-correnti}, let $B\eqdef \partial \widehat{T}$.
The claim follows from \cref{esibizione-calibrazione-correnti} if we can show that given an immersed triple junctions network $\Gamma:G\to \R^2$ having the same endpoints of $\Gamma_*$, there exists a $1$-rectifiable current with coefficient in $\mathcal{G}$ and boundary $\partial T= B$ such that $\mathbb{M}(T)\le{\rm L}(\Gamma)$.

Recalling the construction of $\widehat{T}=[\Gamma_*(G),\tau,\theta]$ in the proof of \cref{esibizione-calibrazione-correnti}, the multiplicity $\theta$ is constant along each edge of $\Gamma_*$. Hence for any edge $E_i$ of the graph $G$ we can associate $g_{E_i} \in \{g_1,g_2,g_3\}$ such that $\theta(\Gamma_*(p)) = g_{E_i}$ for $\Ha^1$-a.e. $p \in E_i$. Moreover, up to inverting the orientation of the edges of $G$, we can assume that $\tau(\Gamma_*(p))=\tau_{\Gamma_*|_{E_i}}(p)$ for any $p$ in the interior of $E_i$ and any $i$, i.e., the orientation of $\widehat{T}$ is the one induced by the immersions $\Gamma_*|_{E_i}$.


So we can define the desired current by taking $T\eqdef[\Gamma(G), \tau_\Gamma, \theta_\Gamma]$ as follows. For any edge $E_i$ of $G$ let $T_{E_i}\eqdef [ \Gamma|_{E_i}(E_i), \tau_i, \theta_i]$ be the current induced by $\Gamma|_{E_i}$ as in \cref{rem:BoundaryImmersionSegment} taking $g=g_{E_i}$. Hence define $T\eqdef \sum_{E_i} T_{E_i}$.

Since $\Gamma_*$ and $\Gamma$ have the same domain graph $G$, whose edge have a fixed orientation, recalling \cref{rem:BoundaryImmersionSegment} it follows that $\partial T = \partial \widehat{T}=B$. Moreover, since $\|\theta_\Gamma(x)\| \le \sum_{E_i} \sharp (\Gamma|_{E_i})^{-1}(x) \, \| g_{E_i}\| = \sharp \Gamma^{-1}(x)$, it follows that $\mathbb{M}(T)\le {\rm L}(\Gamma)$.
\end{proof}

\begin{rem}\label{rem:ComparisonLips1}
By approximation, the result in \cref{stessa-topologia} clearly holds also in case the maps $\Gamma|_{E_i}$ of the comparison networks are just Lipschitz immersions, i.e., Lipschitz maps with almost everywhere non-vanishing derivative $\left(\Gamma|_{E_i}\right)'$.
\end{rem}

We now derive further minimizing properties of minimal networks among competitors with possibly different topologies.

Consider a graph $G$ and 
another graph $H$ that contains a copy of the graph $G$.
The next corollary roughly states that if 
we consider a minimal network 
$\Gamma_*:G\to \R^2$ 
and a network $\Gamma:H\to \R^2$ with the same endpoints
of $\Gamma_*$, then the length of  $\Gamma$ is no less than the one of $\Gamma_*$.\\
We observe that some topological assumption of this kind is also necessary for the length of a given $\Gamma_*$ to be minimizing. Otherwise, given a fixed minimal network $\Gamma_*:G\to \R^2$ whose image contains a cycle, just by deleting an interior segment of an edge of the cycle one gets a strictly shorter minimal network (with two additional endpoints) as depicted in Figure~\ref{Fig:shorter}.

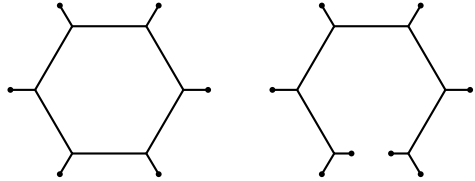
\begin{figure}[H]
	\begin{center}
	\begin{tikzpicture}[scale=1.3]
\draw[thick]
(-0.5,-0.86)--(-0.375,-0.649)
(-1,0)-- (-0.75,0)
(-0.5,0.86)--(-0.375,0.649)
(0.5,-0.86)--(0.375,-0.649)
(1,0)-- (0.75,0)
(0.375,0.649)--(0.5,0.86) ;
\draw[thick]
(-0.375,-0.649)--(-0.75,0)--(-0.375,0.649)--
(0.375,0.649)--(0.75,0)--(0.375,-0.649)--(-0.375,-0.649);
\fill[black](1,0) circle (0.85pt);  
\fill[black](-1,0) circle (0.85pt);  
\fill[black](0.5,-0.86) circle (0.85pt);  
\fill[black](-0.5,-0.86) circle (0.85pt);  
\fill[black](0.5,0.86) circle (0.85pt);  
\fill[black](-0.5,0.86) circle (0.85pt);  
\end{tikzpicture}\qquad
\begin{tikzpicture}[scale=1.3]
\draw[thick]
(-0.5,-0.86)--(-0.375,-0.649)
(-1,0)-- (-0.75,0)
(-0.5,0.86)--(-0.375,0.649)
(0.5,-0.86)--(0.375,-0.649)
(1,0)-- (0.75,0)
(0.375,0.649)--(0.5,0.86) ;
\draw[thick]
(-0.2,-0.649)--(-0.375,-0.649)--(-0.75,0)
--(-0.375,0.649)--(0.375,0.649)--(0.75,0)--(0.375,-0.649)--(0.2,-0.649);
\fill[black](-0.2,-0.649) circle (0.85pt);
\fill[black](0.2,-0.649) circle (0.85pt);
\fill[black](1,0) circle (0.85pt);  
\fill[black](-1,0) circle (0.85pt);  
\fill[black](0.5,-0.86) circle (0.85pt);  
\fill[black](-0.5,-0.86) circle (0.85pt);  
\fill[black](0.5,0.86) circle (0.85pt);  
\fill[black](-0.5,0.86) circle (0.85pt);  
\end{tikzpicture}
	\end{center}
	\caption{Left: A minimal network $\Gamma_*:G\to \R^2$. Right: A network $\Gamma:H\to \R^2$, the graph $H$ does not contain a homeomorphic copy of $G$ and clearly $L(\Gamma)<L(\Gamma_*)$.
	}\label{Fig:shorter}
\end{figure}

\begin{cor}\label{cor:ContieneCopiaG}
Let $\Gamma_*:G\to \R^2$ be a minimal network. Let $\Gamma:H\to \R^2$ be an immersed network such that there exist a subset $G'\subset H$  and a homeomorphism $f:G\to G'$ such that $\Gamma_*(q) = \Gamma(f(q))$ for any endpoint $q$ of $G$.
Then ${\rm L}(\Gamma_*)\le {\rm L}(\Gamma)$.
\end{cor}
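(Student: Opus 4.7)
The strategy is to adapt the approach of \cref{stessa-topologia}: I would construct a $1$-rectifiable current $T$ with coefficients in $\mathcal{G}$, supported in $\Gamma(H)$, with $\partial T = B \eqdef \partial \widehat{T}$ (where $\widehat{T}$ is the calibrated current associated to $\Gamma_*$ by \cref{esibizione-calibrazione-correnti}) and with $\mathbb{M}(T) \le L(\Gamma)$. Combining this with the calibration property from \cref{esibizione-calibrazione-correnti} yields
\[
L(\Gamma_*) = \mathbb{M}(\widehat{T}) \le \mathbb{M}(T) \le L(\Gamma).
\]

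The preliminary step is to extract from $f:G\to G'$ a bijection between the edges of $G$ and the edges of the subgraph $G'\subset H$ that restricts to a homeomorphism on each edge. Since edge interior points of a graph are topologically characterized as those admitting a neighborhood homeomorphic to $\R$, the homeomorphism $f$ must send the disjoint union of edge interiors of $G$ to the disjoint union of edge interiors of $G'$; by connectedness of each $\mathrm{int}(E_i)$ together with injectivity and surjectivity of $f$, each $\mathrm{int}(E_i)$ is sent homeomorphically onto the interior of a single edge of $G'$, which by definition of subgraph is an edge of $H$. Call it $E'_i$, and let $\epsilon_i\in\{\pm 1\}$ record whether $f|_{E_i}$ preserves or reverses the canonical parametrization of $E'_i$. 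Moreover, vertices of $G$ are sent to vertices of $G'$, preserving both type (endpoint vs. junction) and order.

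Using the multiplicities $g_{E_i}\in\{g_1,g_2,g_3\}$ assigned to edges of $G$ in the construction of \cref{esibizione-calibrazione-correnti}, let $T_i$ be the $1$-rectifiable current with coefficients in $\mathcal{G}$ induced by the Lipschitz immersion $\Gamma|_{E'_i}$ with constant multiplicity vector $\epsilon_i g_{E_i}$ via \cref{rem:BoundaryImmersionSegment}, and set $T\eqdef \sum_i T_i$. The sign choice $\epsilon_i$ is calibrated so that at every vertex $v$ of $G'$ the contribution to $\partial T$ at $\Gamma(v)$ is the same algebraic combination of $g_{E_j}$'s that $\partial\widehat{T}$ has at $f^{-1}(v)$. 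At junctions of $G'$ this combination vanishes because the three incident $g_{E_j}$'s of $\widehat{T}$ sum to zero, by the $\tfrac{2\pi}{3}$-condition defining minimality of $\Gamma_*$. At endpoints of $G'$ the hypothesis $\Gamma(v)=\Gamma_*(f^{-1}(v))$ makes $\partial T$ coincide with $B$. For the mass, the area formula applied to the representation in \cref{rem:BoundaryImmersionSegment} gives $\mathbb{M}(T_i)\le \|g_{E_i}\|\,\ell(\Gamma|_{E'_i}) = \ell(\Gamma|_{E'_i})$, so summing and using that the $E'_i$ form a subset of the edges of $H$ yields $\mathbb{M}(T)\le \sum_i\ell(\Gamma|_{E'_i})\le L(\Gamma)$.

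The main obstacle I anticipate is the preliminary step: although the edge-by-edge identification induced by $f$ is topologically transparent, making it rigorous requires some care with the combinatorial definition of graphs in \cref{defgraph}, and in particular with tracking vertex degrees, orientations, and the correct propagation of the endpoint hypothesis when passing from $G$ to $G'\subset H$. Once this identification is in place, the rest of the argument is a direct transcription of the bookkeeping already used in \cref{stessa-topologia}.
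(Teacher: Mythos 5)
Your overall strategy (push a current with coefficients in $\mathcal{G}$ onto the comparison network and invoke the calibration of \cref{esibizione-calibrazione-correnti}) is sound, but the ``preliminary step'' on which everything else rests contains a genuine gap. You claim that, because edge-interior points are topologically characterized by having a neighborhood homeomorphic to $\R$, each $\mathrm{int}(E_i)$ is sent by $f$ homeomorphically onto the interior of a \emph{single} edge of $H$. This is false: the statement only requires $G'$ to be a \emph{subset} of $H$ homeomorphic to $G$ (not a union of full edges of $H$ respecting its combinatorial structure), so $f(E_i)$ can be an arc that traverses several edges of $H$, passes through junctions of $H$ in its interior (a junction $w$ of $H$ of order $3$ with only two incident branches lying in $G'$ has a neighborhood in $G'$ homeomorphic to $\R$, so your topological criterion does not see it), and may cover some edges of $H$ only partially. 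The simplest instance: $G$ a single segment, $H$ a tripod $L_1\cup L_2\cup L_3$ with junction $w$, and $G'=L_1\cup L_2$. This generality is not an artifact --- it is exactly how \cref{cor:ContieneCopiaG} is later applied in the proof of \cref{cor:SottografiQuozientati1}, where $G'$ is built out of partial arcs $\alpha((t,1])$ inside the subgraphs $H_i$. Consequently your currents $T_i$ ``induced by $\Gamma|_{E'_i}$'' are not defined as stated, and the bookkeeping of signs and boundary cancellations does not directly apply.

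The gap is repairable but requires an extra layer you have omitted: either decompose each arc $f(E_i)$ into its maximal sub-arcs lying in single edges of $H$, endow each with multiplicity $\pm g_{E_i}$ oriented consistently along the arc, and check that the boundary contributions cancel at the intermediate junctions of $H$ (they do, since consecutive sub-arcs contribute $+g_{E_i}$ and $-g_{E_i}$ at the shared point); or, as the paper does, bypass the issue entirely by reparametrizing: after modifying $f$ so that $\Gamma\circ f|_{E_i}$ is a Lipschitz immersion away from the finitely many junctions of $H$ met by $f(E_i)$, the composition $\Gamma\circ f:G\to\R^2$ is a Lipschitz comparison network with the \emph{same} domain graph $G$ and the same endpoints, so \cref{stessa-topologia} together with \cref{rem:ComparisonLips1} gives ${\rm L}(\Gamma_*)\le {\rm L}(\Gamma\circ f)\le {\rm L}(\Gamma)$. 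The paper's route is shorter precisely because it never needs to match edges of $G$ with edges of $H$.
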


\begin{proof}
For any edge $E_i$ of $G$, let $J_i$ be the set of junctions of $H$ contained in $f(E_i)$.
Up to homeomorphism, we can assume that the restriction of $f$ on $E_i\setminus f^{-1}(J_i)$ is a smooth diffeomorphism between intervals. Hence for any edge $E_i$ of $G$, the map $\Gamma\circ f|_{E_i}$ is Lipschitz with almost everywhere non-vanishing derivative.
Therefore the map $\Gamma'\eqdef \Gamma\circ f:G\to \R^2$ defines a (Lipschitz regular) immersed triple junctions network with same endpoints of $\Gamma_*$. Hence \cref{stessa-topologia} and \cref{rem:ComparisonLips1} apply and we get ${\rm L}(\Gamma_*) \le {\rm L}(\Gamma') \le {\rm L}(\Gamma)$.
\end{proof}

Exploiting the previous corollary, it is possible to compare the length of a minimal network $\Gamma_*:G\to \R^2$ with the length of a suitable immersed network $\Gamma:H\to \R^2$ possibly having different topology. In the next statement, we consider comparison networks $\Gamma:H\to \R^2$ having ``richer topology'', in the sense that $G$ is assumed to be homeomorphic to a topological quotient of $H$. In particular, $H$ may have more edges and endpoints than $G$ (see \cref{Fig:competitor-richer-top}).

\begin{cor}\label{cor:SottografiQuozientati1}
Let $\Gamma_*:G\to \R^2$ be a minimal network. Let $\Gamma:H\to \R^2$ be an immersed network such that
\begin{enumerate}[label={\normalfont{\arabic*)}}]
    \item there exist connected pairwise disjoint subgraphs $H_i\subset H$ such that, letting $\overline{H}\eqdef H/_\sim$ the quotient space that identifies each $H_i$ with a point, there exists a homeomorphism $F: G \to  \overline{H}$;
    
    \item $\pi^{-1}(F(q)) \cap \cup_i H_i = \emptyset$ for any endpoint $q$ of $G$, where $\pi:H\to \overline{H}$ is the natural projection;
    
    \item for any endpoint $q$ of $G$ there holds $\Gamma_*(q)= \Gamma(F(q))$.\footnote{$\Gamma(F(q))$ is well defined since, by 2), $\pi^{-1}(F(q))$ is a singleton for any endpoint $q$ of $G$.}
\end{enumerate}
Then ${\rm L}(\Gamma_*)\le {\rm L}(\Gamma)$.
\end{cor}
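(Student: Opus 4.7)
The idea is to reduce to \cref{esibizione-calibrazione-correnti}, constructing from the competitor $\Gamma:H\to\R^2$ a $1$-rectifiable current $T$ with coefficients in the group $\mathcal{G}$ of \cref{esibizione-calibrazione-correnti}, supported in $\Gamma(H)$, with $\partial T = \partial\widehat T$ and $\mathbb{M}(T)\le {\rm L}(\Gamma)$. The calibration of $\widehat T$ provided by \cref{esibizione-calibrazione-correnti} then yields ${\rm L}(\Gamma_*)=\mathbb{M}(\widehat T)\le \mathbb{M}(T)\le {\rm L}(\Gamma)$.

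\textbf{Construction of $T$.} The edges of $H$ split into \emph{outside} edges (not contained in any $H_i$) and edges of some $H_i$. By hypothesis 1) the homeomorphism $F$ matches every edge of $G$ with a unique outside edge of $H$ via $\pi$, and by 2) matches every endpoint of $G$ with a vertex of $H$ outside $\cup_i H_i$. Recall that $\widehat T$ places a constant multiplicity $g_E\in\{g_1,g_2,g_3\}$ with a fixed orientation on each edge $E$ of $G$. On every outside edge $\tilde E_j$ of $H$ corresponding to $E\subset G$, let $T$ be the current induced via \cref{rem:BoundaryImmersionSegment} by $\Gamma|_{\tilde E_j}$ with multiplicity $g_E$ (with the appropriate sign). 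For each $H_i$, since every junction of $G$ has order $3$ and $G$ has no loops, $F^{-1}(\pi(H_i))$ is a junction of $G$ of order $3$, and consequently exactly three distinct outside edges of $H$ are incident to $H_i$, at (possibly coinciding) points $p_a,p_b,p_c\in H_i$, with boundary contributions into $H_i$ equal to vectors $g_a,g_b,g_c\in\{\pm g_1,\pm g_2,\pm g_3\}$ satisfying $g_a+g_b+g_c=0$. Fix a spanning tree of $H_i$ and a root $v_i^*\in H_i$, and for $k\in\{a,b,c\}$ let $\alpha_k$ be the unique tree path from $v_i^*$ to $p_k$; add to $T$ the current induced by $\Gamma|_{\alpha_k}$ with multiplicity $-g_k$. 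All non-tree edges of $H_i$ carry zero multiplicity.

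\textbf{Checks.} For the boundary: at endpoints of $H$ in $\pi^{-1}(F(q))$ (a singleton by 2)), hypothesis 3) produces the correct atom of $\partial \widehat T$; at triple junctions of $H$ outside $\cup_i H_i$ the three incident multiplicities of $T$ are those at the corresponding junction of $G$ and hence sum to zero; inside each $H_i$ the atoms $g_k\delta_{p_k}$ from the outside edges are cancelled by the boundaries of the paths $\alpha_k$, while the contribution at the root $v_i^*$ equals $-\sum_k g_k = 0$. For the mass: decompose $T=\sum_j \mathcal{T}_j$ over the edges $\tilde E_j$ of $H$, where $\mathcal{T}_j$ is the current induced by $\Gamma|_{\tilde E_j}$ with net multiplicity $\theta_j$; \cref{rem:BoundaryImmersionSegment} yields $\mathbb{M}(\mathcal{T}_j) \le \|\theta_j\|\,\ell(\gamma^j)$, so $\mathbb{M}(T)\le {\rm L}(\Gamma)$ will follow from the bound $\|\theta_j\|\le 1$ on every edge of $H$.

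\textbf{Main obstacle.} The bound $\|\theta_j\|\le 1$ is immediate for outside edges and for non-tree edges of any $H_i$; the delicate case is that of tree edges of some $H_i$ shared by several of the $\alpha_k$'s. Here the use of a common root $v_i^*$ plays a central role: it guarantees that any shared tree edge is traversed by the overlapping $\alpha_k$'s in the \emph{same} orientation (since in a tree, paths from a common source share exactly an initial segment). The net multiplicity on such an edge is therefore $\sum_{k\in S}(-g_k)$ for some $S\subseteq\{a,b,c\}$, and exploiting $g_a+g_b+g_c=0$ one checks that this value is $0$ for $|S|\in\{0,3\}$ and equals $\pm g_j\in\{\pm g_1,\pm g_2,\pm g_3\}$ for $|S|\in\{1,2\}$, so it always has norm at most $1$. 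A different routing (say a chain $p_a\to p_b\to p_c$) would in general traverse some shared edges in opposite directions, producing multiplicities of the form $g_a-g_c$, whose norm $\|\cdot\|$ equals $2$, and the mass bound would fail.
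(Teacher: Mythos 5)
Your overall strategy is sound and genuinely different from the paper's. The paper does not build a current on all of $\Gamma(H)$: it first extracts a subset $G'\subset H$ homeomorphic to $G$ (choosing an embedded arc, resp.\ a tripod, inside each $H_i$) and then invokes \cref{cor:ContieneCopiaG}, i.e.\ the already-established comparison with networks containing a copy of $G$. Your spanning-tree routing from a common root, together with the observation that partial sums of $\{g_a,g_b,g_c\}$ with $g_a+g_b+g_c=0$ always have $\|\cdot\|$-norm at most $1$, is in effect an algebraic version of the same selection (the edges receiving nonzero multiplicity form exactly such a tripod), and it is closer in spirit to the paper's proof of \cref{cor:SottografiQuozientati2}. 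It avoids the paper's case analysis needed to produce the two embedded arcs $\alpha_{12},\alpha_{13}$ meeting correctly, at the price of redoing the boundary and mass bookkeeping that \cref{cor:ContieneCopiaG} already packages; your remark that a chain routing would produce multiplicities of norm $2$ correctly identifies why the common root is essential.

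There is, however, one concrete gap. You assert that ``$F^{-1}(\pi(H_i))$ is a junction of $G$ of order $3$'' and deduce that exactly three outside edges are incident to $H_i$. This does not follow from ``every junction of $G$ has order $3$ and $G$ has no loops'': hypothesis 2) only excludes that $\pi(H_i)$ is the image of an \emph{endpoint} of $G$, so $\pi(H_i)$ may perfectly well correspond to an \emph{interior point of an edge} of $G$ (a point of order $2$ in $\overline H$), in which case exactly two outside edges of $H$ meet $H_i$. This case genuinely occurs — it is precisely the second ``lens'' of the competitor in \cref{Fig:competitor-richer-top}, sitting in the middle of the horizontal edge of $\Gamma_*$ — and the paper treats it as a separate case. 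The same oversight underlies your earlier claim that $F$ matches every edge of $G$ with a \emph{unique} outside edge of $H$: an edge of $G$ may correspond to a chain of several outside edges joined through collapsed $H_i$'s (each outside edge does determine a unique edge of $G$ containing its image, which is all your construction actually needs). Fortunately the repair is immediate: in the order-$2$ case the two incoming multiplicities satisfy $g_a+g_b=0$, and the identical spanning-tree routing gives net multiplicity $0$ or $\pm g_a$ on every tree edge, so the bound $\|\theta_j\|\le 1$ and the boundary cancellation persist. With that case added and the edge correspondence stated correctly, the argument is complete.
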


\begin{proof}
We want to show that there exists a subset $G'\subset H$ homeomorphic to $G$. In this way, employing also 3), applying \cref{cor:ContieneCopiaG} we get the result.

We are going to define $G'$ by selecting suitable paths contained in the subgraphs $H_i$ in order to join the connected components of $H \setminus \cup_i H_i$ so to get a subset $G'$ homeomorphic to $G$.

By 1) and 2), for any subgraph $H_i$, using the homeomorphism $F^{-1}$, the point $\pi(H_i) \in \overline{H}$ corresponds either to an interior point of an edge $E_j$ of $G$, or to a triple junction $m$ of $G$. We distinguish the two cases.
\begin{itemize}
    \item Suppose that, up to renaming, $\pi(H_i)$ corresponds to an interior point of the edge $E_1$ in $G$. Hence $H_i \cap \overline{H\setminus H_i}$ consists of two points $x,y$ that are endpoints of two (different) edges $L_1,L_2$ of $H$. Since $H_i$ is connected, there exists an embedding $\alpha:[0,1]\to H_i$ connecting $x$ and $y$.
    
    Hence $L_1 \cup \alpha([0,1]) \cup L_2$ is homeomorphic to $E_1$.
    
    \item Suppose that, up to renaming, $\pi(H_i)$ corresponds to a triple junction $m$ in $G$ where the edges $E_1,E_2,E_3$ concur. Hence $H_i \cap \overline{H\setminus H_i}$ consists of three points $a_1,a_2,a_3$ that are endpoints of three (different) edges $L_1,L_2,L_3$ of $H$. Up to renaming, there exist embeddings $\alpha_{12}:[0,1]\to H_i$ from $a_1$ to $a_2$ not passing through $a_3$ and $\alpha_{13}:[0,1]\to H_i$ from $a_1$ to $a_3$ not passing through $a_2$.
    
    Indeed $H_i$ is connected, thus there is an embedding $\sigma:[0,1]\to H_i$ from $a_1$ to $a_2$. If $\sigma$ does not touch $a_3$, by connectedness there is an embedding $\tilde{\sigma}:[0,1]\to H_i$ from $a_3$ to $a_1$. If $a_2 \not \in \tilde{\sigma}([0,1])$, we are done. If $a_2  \in \tilde{\sigma}([0,1])$, then the claim follows by taking $a_2$ in place of $a_1$ and by splitting $\tilde{\sigma}$ into two embeddings. If otherwise $a_3 \in \sigma([0,1])$, then the claim follows by taking $a_3$ in place of $a_1$ and by splitting $\sigma$ into two embeddings.
    
    Therefore there is a junction $w$ of $H$ and times $t_2,t_3 \in (0,1)$ such that $\alpha_{12}(t_2)=\alpha_{13}(t_3)=w$ and $\alpha_{12}((t_2,1]) \cap \alpha_{13}((t_3,1]) = \emptyset$.
    
    Hence $L_1 \cup \alpha_{12}([0,1]) \cup \alpha_{13}((t_3,1]) \cup L_2\cup L_3$ is homeomorphic to $E_1\cup E_2\cup E_3$ in $G$.
\end{itemize}
Performing the selections in the previous items for any $H_i$ we obtain subsets $S_i \subset H_i$ such that $G$ is homeomorphic to $G'\eqdef (H\setminus \cup_i H_i ) \cup \bigcup_i S_i$ via a homeomorphism $f:G \to G'$ such that $\Gamma_*(q)= \Gamma(f(q))$ for any endpoint of $G$, by 3). Hence \cref{cor:ContieneCopiaG} applies and the proof follows.
\end{proof}

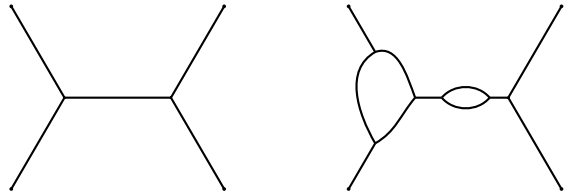
\begin{figure}[H]
\begin{center}
\begin{tikzpicture}[scale=0.35]
\draw[thick]
(0,0)--(-2,-3.46)
(0,0)--(-2,3.46)
(0,0)--(4,0)
(4,0)--(6,-3.46)
(4,0)--(6,3.46);
\fill[black](-2,-3.46) circle (2pt);
\fill[black](-2,3.46) circle (2pt);
\fill[black](6,-3.46) circle (2pt);
\fill[black](6,3.46) circle (2pt);
\end{tikzpicture}\qquad\qquad
\begin{tikzpicture}[scale=0.35]
\draw[thick]
(-1,-1.73)--(-2,-3.46)
(-1,1.73)--(-2,3.46)
(0.5,0)--(1.5,0)
(3.3,0)--(4,0)
(4,0)--(6,-3.46)
(4,0)--(6,3.46);
\draw[thick]
(-1,1.73)to [out=-150, in=120, looseness=1](-1,-1.73)
(-1,1.73)to [out=20, in=110, looseness=1] (0.5,0)
(-1,-1.73)to [out=30, in=-130, looseness=1] (0.5,0)
(1.5,0)to [out=50, in=130, looseness=1] (3.3,0)
(1.5,0)to [out=-50, in=-130, looseness=1] (3.3,0);
\fill[black](-2,-3.46) circle (2pt);
\fill[black](-2,3.46) circle (2pt);
\fill[black](6,-3.46) circle (2pt);
\fill[black](6,3.46) circle (2pt);
\end{tikzpicture}
	\end{center}
	\caption{Left: A minimal network $\Gamma_*:G\to \R^2$. Right: A competitor $\Gamma:H\to \R^2$, fulfilling the hypothesis of Corollary~\ref{cor:SottografiQuozientati1}.}\label{Fig:competitor-richer-top}
\end{figure}

In contrast to \cref{cor:SottografiQuozientati1}, exploiting again the general \cref{esibizione-calibrazione-correnti}, we can further prove that a minimal network $\Gamma_*:G\to \R^2$ is also length-minimizing among suitable immersed networks $\Gamma:H\to \R^2$ having poorer topology. In this case ``poorer topology'' means that $H$ is homeomorphic to a quotient of $G$. In particular, $H$ may have less edges than $G$ (see \cref{Fig:poorer-top}).

\begin{cor}\label{cor:SottografiQuozientati2}
Let $\Gamma_*:G\to \R^2$ be a minimal network. Let $\Gamma:H\to \R^2$ be an immersed network such that
\begin{enumerate}[label={\normalfont{\arabic*)}}]
    \item there exist connected pairwise disjoint subgraphs $G_i\subset G$ such that, letting $\overline{G}\eqdef G/_\sim$ the quotient space that identifies each $G_i$ with a point, there exists a homeomorphism $F:\overline{G}\to H$;
    
    \item for any endpoint $q$ of $G$, there holds that $q \not\in \cup_i G_i$ and $\Gamma_*(q)= \Gamma(F(\pi(q)))$, where $\pi:G\to \overline{G}$ is the natural projection.
\end{enumerate}
Then ${\rm L}(\Gamma_*)\le {\rm L}(\Gamma)$.
\end{cor}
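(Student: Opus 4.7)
The plan is to adapt the strategy of \cref{cor:ContieneCopiaG} and \cref{cor:SottografiQuozientati1}: starting from the calibrated current $\widehat{T}$ provided by \cref{esibizione-calibrazione-correnti}, I would construct an auxiliary $1$-rectifiable current $T$ with coefficients in $\mathcal{G}$ whose support is contained in $\Gamma(H)$, with boundary $\partial T = B\eqdef\partial\widehat{T}=c_1\delta_{p_1}+\ldots+c_n\delta_{p_n}$ and mass $\mathbb{M}(T)\le L(\Gamma)$. The calibration provided by \cref{esibizione-calibrazione-correnti}, applied to the class $\mathcal{C}_2$ associated with this $B$, then yields $L(\Gamma_*)=\mathbb{M}(\widehat{T})\le \mathbb{M}(T)\le L(\Gamma)$.

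For the construction of $T$, I use assumption 1) to note that the homeomorphism $F:\overline{G}\to H$ induces a natural bijection between the edges of $H$ and the edges of $G$ that are not contained in any $G_i$ (edges of $G$ fully contained in some $G_i$ disappear in the quotient), and between the vertices of $H$ and the set whose elements are either vertices of $G$ not lying in $\bigcup_i G_i$ or the collapsed points $\pi(G_i)$. For each edge $\widetilde{E}$ of $H$, let $E$ be the corresponding edge of $G$; following the same scheme as in the proof of \cref{stessa-topologia}, I would apply \cref{rem:BoundaryImmersionSegment} to the Lipschitz immersion $\Gamma|_{\widetilde{E}}$ with multiplicity $\epsilon_{\widetilde{E}}\,g_E\in \mathcal{G}$, where $g_E \in \{g_1,g_2,g_3\}$ is the constant multiplicity of $\widehat{T}$ on $E$ and $\epsilon_{\widetilde{E}}\in\{\pm 1\}$ is chosen so that the natural orientation of $\widetilde{E}$ is mapped via $F^{-1}\circ\pi$ to the orientation of $E$ used in \cref{esibizione-calibrazione-correnti}. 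Setting $T\eqdef \sum_{\widetilde{E}} T_{\widetilde{E}}$, the identity $\|g_E\|=1$ and \cref{rem:BoundaryImmersionSegment} give $\mathbb{M}(T)\le \sum_{\widetilde{E}}\ell(\Gamma|_{\widetilde{E}})=L(\Gamma)$, as required.

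The main step to be carried out is checking that $\partial T = B$, which amounts to verifying a local flow-balance identity at each vertex of $H$. For a vertex of $H$ coming from a vertex $w$ of $G$ not contained in $\bigcup_i G_i$, the edges of $H$ meeting it correspond, with compatible orientations after the choice of signs $\epsilon_{\widetilde{E}}$, to the edges of $G$ meeting $w$; hence the contribution of $\partial T$ there equals the contribution of $\partial \widehat{T}$ at $w$, which is $0$ for a triple junction and $c_j$ when $w=p_j$, matching the required boundary thanks to hypothesis 2). The delicate case is a vertex of $H$ coming from a collapsed subgraph $G_i$: the edges of $H$ meeting it correspond exactly to the edges of $G$ with precisely one endpoint in $G_i$, and I must show that the signed sum of their multiplicities vanishes. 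For this I would sum the local balance $\sum_{E \ni v}\sigma_{E,v}\,g_E = 0$ of $\widehat{T}$ over all vertices $v$ of $G$ lying in $G_i$ (all such $v$ are triple junctions by hypothesis 2), so the right-hand side is indeed zero); in this sum every edge of $G$ entirely contained in $G_i$ appears twice with opposite signs and cancels, while every edge with exactly one endpoint in $G_i$ appears once, producing precisely the contribution of $\partial T$ at the collapsed vertex. This yields $\partial T = B$ and concludes the proof.
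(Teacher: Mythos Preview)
Your approach is the same as the paper's: build a $1$-rectifiable current $T$ with coefficients in $\mathcal{G}$ supported on $\Gamma(H)$, with $\partial T=\partial\widehat{T}$ and $\mathbb{M}(T)\le L(\Gamma)$, then apply \cref{esibizione-calibrazione-correnti}. Your boundary check at collapsed vertices, summing the triple-junction balance of $\widehat{T}$ over all vertices of $G_i$ so that internal edges cancel in pairs, is exactly the content of the paper's identity \eqref{eq:zzNoBoundary}.

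There is, however, one genuine imprecision. You assert that the homeomorphism $F:\overline{G}\to H$ induces a bijection between the edges of $H$ and the edges of $G$ not contained in any $G_i$, and between the vertices of $H$ and the set of non-collapsed vertices together with the points $\pi(G_i)$. This need not hold: $F$ is only a homeomorphism of topological spaces, and a collapsed point $\pi(G_i)$ may have degree $2$ in $\overline{G}$, in which case $F(\pi(G_i))$ is an interior point of some edge $H_s$ of $H$ rather than a vertex. In that situation two edges $E_{k_1},E_{k_2}$ of $G$ correspond to the single edge $H_s$, so your phrase ``let $E$ be \emph{the} corresponding edge of $G$'' is ill-defined, and your boundary computation at the two endpoints of $H_s$ would use $g_{E_{k_1}}$ at one end and $g_{E_{k_2}}$ at the other. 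The paper closes this gap by observing that the flow-balance identity forces $g_{E_{k_1}}=g_{E_{k_2}}$ with compatible orientations (since the $g_E$'s lie in $\{g_1,g_2,g_3\}$ and must sum to zero with signs), so a single well-defined multiplicity can be assigned to $H_s$. Once you add this observation, your argument is complete and coincides with the paper's.
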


\begin{proof}
Let $\widehat{T}=[\Gamma_*(G),\tau,\theta]$ be given by \cref{esibizione-calibrazione-correnti}. As in the proof of \cref{stessa-topologia}, for any edge $E_j$ of the graph $G$ we can associate $g_{E_j} \in \{g_1,g_2,g_3\}$ such that $\theta(\Gamma_*(p)) = g_{E_j}$ for $\Ha^1$-a.e. $p \in E_j$. Moreover, up to inverting the orientation of the edges of $G$, we can assume that $\tau(\Gamma_*(p))=\tau_{\Gamma_*|_{E_j}}(p)$ for any $p$ in the interior of $E_j$ and any $j$.
Also, for any edge $E_j$, we denote $\widehat{T}_{E_j}\eqdef [ \Gamma_*|_{E_j}, \tau_j,\theta_j]$ where $\tau_j(x)= \tau_{\Gamma_*|_{E_j}}(\Gamma^{-1}(x))$ and $\theta_j(x) = g_{E_j}$ on $\Gamma_*|_{E_j}$ (and zero elsewhere).

For any subgraph $G_i$, consider the current $S_i\eqdef \sum_{E_j \subset G_i} \widehat{T}_{E_j}$. Since by 2) no endpoints of $G$ touch $\cup_i G_i$, then $\partial S_i = \sum_j g^i_j \delta_{m_j^i}$ for some junctions $M_i=\{m_j^i\}_j\subset \R^2$ depending on $i$ and elements of the group $\{g^i_j\}_j$. Hence $\sum_j g^i_j=0$ by \cref{rem:BoundarySumZero}, for any $i$. Moreover, for given $G_i$, we denote by 
$$
\mathscr{E}_i\eqdef \{(e_k,E_k) \st  e_k \in \{0,1\},\text{ $E_k$ edge of $G$ has the endpoint $e_k$ lying in $G_i$} \}\,.
$$
Since $T$ has no boundary at junctions, i.e., $\partial T$ is supported on endpoints, it follows that
\begin{equation}\label{eq:zzNoBoundary}
    \sum_{(e_k,E_k) \in \mathscr{E}_i} (-1)^{1+e_k} g_{E_k}= -\sum_j g^i_j =0,
\end{equation}
for any $i$.

We want to associate to $\Gamma:H\to \R^2$ a suitable current $T$ with $\partial T =\partial \widehat{T}$.
With little abuse of terminology, we will say that $\pi(E_k)$ is an edge of $\overline{G}$ for any edge $E_k$ of $G$ not belonging to $\cup_i G_i$; in particular, for such a $k$, $\pi(E_k)$ is an oriented interval. By 2) we have that $F(\pi(G_i))$ is either a junction of $H$ or an interior point of some edge of $H$. If the latter case happens for some $G_{i_0}$, then $\mathscr{E}_{i_0}= \{ (e_{k_1}, E_{k_1}), (e_{k_2}, E_{k_2})\}$ has two elements. Since $g_{E_j} \in \{g_1,g_2,g_3\}$ for any edge $E_j$ of $G$, then \eqref{eq:zzNoBoundary} implies that $g_{E_{k_1}}=g_{E_{k_2}}$ and $e_{k_1}\neq e_{k_2}$. Hence there is an embedding $\alpha_{k_1,k_2}:[0,1]\to \overline{G}$ whose image is $\pi(E_{k_1})\cup \pi(G_i) \cup \pi(E_{k_2})$ and $\alpha_{k_1,k_2}$ preserves the orientation of $\pi(E_{k_1}), \pi(E_{k_2})$, i.e., the restriction $\alpha_{k_1,k_2}:\alpha_{k_1,k_2}^{-1}(\pi(E_{k_\ell}))\to \pi(E_{k_\ell})$ is an orientation preserving homeomorphism for $\ell=1,2$.

Therefore, up to inverting orientations of edges of $H$, we have that the restriction $F: F^{-1}(H_s)\to H_s$ is orientation preserving for any edge $H_s$ of $H$, i.e., if $\pi(E_k)\subset F^{-1}(H_s)$ and $E_k \cap (\cup_i G_i)=\emptyset$ then the restriction $F: \pi(E_k)\to F(\pi(E_k))\subset H_s$ is orientation preserving. Also, we can associate to any edge $H_s$ of $H$ a group element $g_s \in \{g_1,g_2,g_3\}$ where $g_s=g_{E_k}$ for any edge $E_k$ of $G\setminus (\cup_i G_i)$ with $\pi(E_k) \subset F^{-1}(H_s)$.

Finally we can define the desired current $T=\sum_s T_{H_s}$ where $T_{H_s}\eqdef [ \Gamma|_{H_s}, \sigma_s, \vartheta_s ]$ is the current induced by the immersion $\Gamma|_{H_s}$ as in \cref{rem:BoundaryImmersionSegment} taking $g=g_s$.

By the above observations we have $\partial T_{H_s} = g_s\delta_{\Gamma|_{H_s}(1)} - g_s \delta_{\Gamma|_{H_s}(0)}$. Now junctions of $H$ correspond: either to junctions of $G$ not belonging to $\cup_i G_i$, or to identified graphs $\pi(G_i)$ in $\overline{G}$. Calling $M\eqdef \{ m_i=\Gamma(F(\pi(G_i))) \st F(\pi(G_i)) \text{ is a junction in $H$} \} \subset \R^2$, since $F$ is orientation preserving, we get
\[
\begin{split}
    \partial T 
    &= \sum_s \partial T_{H_s} = \partial \widehat{T} + \sum_{m_i \in M} \sum_{(e_k,E_k) \in \mathscr{E}_i} (-1)^{1+e_k} g_{E_k} m_i \overset{\eqref{eq:zzNoBoundary}}{=} \partial \widehat{T}.
\end{split}
\]
Moreover $\|\sum_k \vartheta_k(x)\| \le \sum_k \sharp (\Gamma|_{H_k})^{-1}(x) \, \| g_{E_k}\| = \sharp \Gamma^{-1}(x)$, thus $\mathbb{M}(T)\le {\rm L}(\Gamma)$. Hence \cref{esibizione-calibrazione-correnti} implies  ${\rm L}(\Gamma_*)\le \mathbb{M}(T)\le {\rm L}(\Gamma)$.

\end{proof}

\begin{figure}[H]
	\begin{center}
	\begin{tikzpicture}[scale=1.3]
\draw[thick]
(-0.5,-0.86)--(-0.375,-0.649)
(-1,0)-- (-0.75,0)
(-0.5,0.86)--(-0.375,0.649)
(0.5,-0.86)--(0.375,-0.649)
(1,0)-- (0.75,0)
(0.375,0.649)--(0.5,0.86) ;
\draw[thick]
(-0.375,-0.649)--(-0.75,0)--(-0.375,0.649)--
(0.375,0.649)--(0.75,0)--(0.375,-0.649)--(-0.375,-0.649);
\fill[black](1,0) circle (0.85pt);  
\fill[black](-1,0) circle (0.85pt);  
\fill[black](0.5,-0.86) circle (0.85pt);  
\fill[black](-0.5,-0.86) circle (0.85pt);  
\fill[black](0.5,0.86) circle (0.85pt);  
\fill[black](-0.5,0.86) circle (0.85pt);  
\path
(-1, 0.7)node[above]{$\Gamma_*$};
\end{tikzpicture}\qquad
\begin{tikzpicture}[scale=1.3]
\draw[thick]
(1,0)--(-1,0)
(-0.5,-0.86) --(0.5,0.86)
(0.5,-0.86) --(-0.5,0.86);
\fill[black](1,0) circle (0.85pt);  
\fill[black](-1,0) circle (0.85pt);  
\fill[black](0.5,-0.86) circle (0.85pt);  
\fill[black](-0.5,-0.86) circle (0.85pt);  
\fill[black](0.5,0.86) circle (0.85pt);  
\fill[black](-0.5,0.86) circle (0.85pt);  
\path
(-1, 0.7)node[above]{$\Gamma$};
\end{tikzpicture}\qquad\qquad\qquad\quad
\begin{tikzpicture}[scale=0.3]
\draw[thick]
(0,0)--(-2,-3.46)
(0,0)--(-2,3.46)
(0,0)--(4,0)
(4,0)--(6,-3.46)
(4,0)--(6,3.46);
\fill[black](-2,-3.46) circle (3.5pt);
\fill[black](-2,3.46) circle (3.5pt);
\fill[black](6,-3.46) circle (3.5pt);
\fill[black](6,3.46) circle (3.5pt);
\path
(-3, 3.3)node[above]{$\Gamma_*$};
\end{tikzpicture}\qquad\qquad
\begin{tikzpicture}[scale=0.3]
\draw[thick]
(-2,3.46)--(6,-3.46)
(-2,-3.46)--(6,3.46);
\fill[black](-2,-3.46) circle (3.5pt);
\fill[black](-2,3.46) circle (3.5pt);
\fill[black](6,-3.46) circle (3.5pt);
\fill[black](6,3.46) circle (3.5pt);
\path
(-3, 3.3)node[above]{$\Gamma$};
\end{tikzpicture}
	\end{center}
	\caption{Two couples of minimal networks $\Gamma_*:G\to\mathbb{R}^2$
	and competitors $\Gamma:H\to\mathbb{R}^2$
	fulfilling the hypothesis of Corollary~\ref{cor:SottografiQuozientati2}.}\label{Fig:poorer-top}
\end{figure}
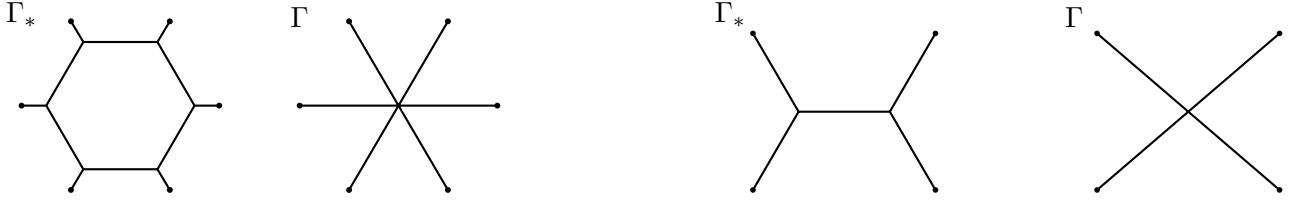

\begin{rem}
By approximation, the results in \cref{cor:ContieneCopiaG}, \cref{cor:SottografiQuozientati1}, and \cref{cor:SottografiQuozientati2} hold also in case the maps $\Gamma|_{E_i}$ of the comparison networks are just Lipschitz immersions.
\end{rem}

\section{Local minimality among partitions}\label{Sec:partition}

Let $\Omega\subset \mathbb{R}^2$ be open. We denote by $P(E,\Omega):=\vert D\chi_{E}\vert(\Omega)$ the (relative) perimeter of a measurable set $E\subset \Omega$ in $\Omega$.  The symbol $\partial^*E$ denotes the reduced boundary of $E$, namely the set of points $x\in {\rm spt} |D\chi_E|$ where the generalized outer unit normal to $E$ exists. Recall that $|D\chi_E|$ is concentrated on $\partial^* E$. For the theory of sets of finite perimeter and functions of bounded variation we refer the reader to \cite{afp}.

We say that  $\mathbf{E}=(E_1,\ldots,E_n)$, for $n \in \N$ with $n\ge 2$,
is Caccioppoli partition of $\Omega$ if $E_i \subset \Omega$ for any $i$, $|E_i\cap E_j|=0$ for $i\neq j$, $|\Omega \setminus \cup_{i=1}^n E_i |=0$, and $\sum_{i=1}^n P(E_i,\Omega)<+\infty$. We denote by $\Sigma_{ij}:=\partial^*E_i\cap \partial^*E_j$ and by $\nu_{ij}=\nu_i=-\nu_j$
the unit normal to $\Sigma_{ij}$, where $\nu_i$ is the generalized outer unit normal to the set $E_i$. In particular we can think of $\nu_{ij}$ as a normal pointing from $E_i$ into $E_j$.

We shall mostly focus our attention to Caccioppoli partitions $\mathbf{E}=(E_1,E_2,E_3)$ defined by three sets.
We remark that, thanks to~\cite[Theorem 4.17]{afp}, it holds  
$\Ha^1(\partial^*E_1\cap \partial^*E_2 \cap \partial^*E_3)=0$, or equivalently 
$$
\Ha^1(\Sigma_{ij}\cap\Sigma_{ik})=0\;\text{for}\;i,j,k\in \{1,2,3\}\;\text{with}\;j\neq k\,.
$$
Moreover  
$P(E_i,\Omega)
=\Ha^1(\Sigma_{ij}\cup\Sigma_{ik})=\Ha^1(\Sigma_{ij})+\Ha^1(\Sigma_{ik})$, for any distinct $i,j,k$ and then
$$
\frac{1}{2}\sum_{i=1}^3P(E_i,\Omega)
=\Ha^1(\Sigma_{12}\cap \Omega)+\Ha^1(\Sigma_{23}\cap \Omega)+\Ha^1(\Sigma_{31}\cap \Omega)\,.
$$

We recall from \cite[Theorem 3.87]{afp} that $BV$ functions admit \emph{(inner) traces} on boundaries of open sets with Lipschitz boundary. More precisely, if $\Omega\subset \R^2$ is a bounded open set with Lipschitz boundary and $u \in [BV(\Omega)]^n$, then for $\Ha^1$-a.e. point $x \in \partial \Omega$ there exists $\tr_\Omega u(x) \in \R^n$ such that
\[
\lim_{r\to0^+} \frac{1}{r^2} \int_{B_r(x) \cap \Omega} |u(y) - \tr_\Omega u(x)| \de y =0.
\]
Moreover the function $\tr_\Omega u$ belongs to $[L^1(\partial\Omega,\Ha^1)]^n$.

If $F\subset \R^2$ is measurable and $u:F\to \R^n$ is a (possibly vector-valued) measurable function, we say that $u$ has \emph{approximate limit} $\text{ap -}\lim_{y\to x} u = u_0 \in \R^n$ \emph{at $x \in F$} if
\[
\lim_{r\to0^+} \frac{1}{r^2} \int_{B_r(x) \cap F} |u(y) - u_0| \de y =0.
\]
Adopting the above terminology, $\tr_\Omega u (x)= \text{ap -}\lim_{y\to x}u$ for $\Ha^1$-a.e. $x \in \partial \Omega$, for $u \in BV(\Omega)$ with $\Omega$ Lipschitz and bounded.

\medskip

Let us now fix a bounded open set $\Omega\subset\mathbb{R}^2$ with Lipschitz boundary
and let $\widehat{\mathbf{E}}=(\widehat{E}_1,\widehat{E}_2,\widehat{E}_3)$ be a fixed Caccioppoli partition of $\Omega$.
We define
\begin{displaymath}
\mathcal{A} := \left\{\mathbf{E}=(E_1,E_2,E_3) \st \mathbf{E} \mbox{ is a Caccioppoli partition of }\Omega\,,\, 
\tr_\Omega \chi_{E_i} = \tr_\Omega \chi_{\widehat{E}_i} \,\,\Ha^1\text{-a.e. on $\partial\Omega$}
\right\}\,.
\end{displaymath}
Clearly $\widehat{\mathbf{E}}$ itself is an element of $\mathcal{A}$.
For every $\mathbf{E}\in\mathcal{A}$ we introduce the energy
\begin{equation*}
\mathcal{P}(\mathbf{E}):= \frac{1}{2}\sum_{i=1}^3 P(E_i,\Omega)\,.
\end{equation*}
A partition $\widetilde{\mathbf{E}}\in \mathcal{A}$ is a \emph{minimizer of $\mathcal{P}$} in $\Omega$ if
\begin{equation*}
\mathcal{P}( \widetilde{\mathbf{E}}) \leq \mathcal{P}(\mathbf{E})\,.
\end{equation*}
for every $\mathbf{E}\in \mathcal{A}$.

\begin{defn}[Approximately regular vector fields on $\R^2$]
Let $\Omega\subset \R^2$ be a bounded open set with Lipschitz boundary. A measurable vector field $\Phi:\overline{\Omega}\to \R^2$ is said to be \emph{approximately regular} if it is bounded and for every Lipschitz curve\footnote{Here by Lipschitz curve we mean the image of a Lipschitz embedding $\sigma:[0,1]\to \overline{\Omega}$.} $\gamma \subset \overline{\Omega}$ there holds
\[
\text{ap -}\lim_{y\to x} \big( \Phi(y)\cdot \nu_\gamma(x) \big) = \Phi(x)\cdot \nu_\gamma(x),
\]
at $\Ha^1$-a.e. $x \in \gamma$, for any chosen unit normal to $\gamma$ at $x$.
\end{defn}
We recall that we are assuming measurable functions to be defined pointwise, hence $\Phi(x)\cdot \nu_\gamma(x)$ is a well defined number in the above definition.

\medskip

In this paper, when we talk about the divergence of a bounded vector field 
$\Phi:\Omega\to\R^2$ we refer to the distributional divergence of $\Phi$, i.e. $\int_\Omega u\, \div \Phi \eqdef - \int_\Omega \nabla u\cdot \Phi \de x$ for any $u \in C^1_c(\Omega)$. We say that $\div\Phi \in L^p(\Omega)$ whenever the distributional divergence is represented by an $L^p(\Omega)$-function, still denoted by $\div \Phi$.\\
If $u \in BV(\Omega)$ we denote by $Du=(\partial_1u,\partial_2u)$ the vector valued measure satisfying $\sum_i\int_\Omega \Phi_i\de (\partial_i u) = - \int_\Omega u\, \div\Phi \de x$ for any $\Phi\in C^1_c(\Omega)$, and we denote $\int_\Omega \Phi\cdot Du \eqdef \sum_i\int_\Omega \Phi_i\de (\partial_i u)$ for any field $\Phi$ such that the latter expression makes sense.

\begin{lemma}[{Divergence theorem for approximately regular vector field, \cite[Lemma 2.4]{AlBuDa}}]\label{divergence-theorem}
Let $\Omega$ be a bounded open set in $\mathbb{R}^2$ with Lipschitz boundary and let $\nu_{\partial\Omega}$ be its inner normal, which is well-defined $\Ha^1$-ae on $\partial\Omega$. Let $\Phi:\overline{\Omega}\to \R^2$
be an approximately regular vector field and let $u\in BV(\Omega)$. Assume that 
$\div \Phi\in L^\infty(\Omega)$ and $\tr_\Omega u \,\Phi\in L^1(\partial\Omega,\Ha^1)$. Then
\begin{equation}\label{thm-div}
    \int_{\Omega} \Phi\cdot Du=-\int_{\Omega} u\, \div\Phi \de x
    -\int_{\partial\Omega} \tr_\Omega u\,\,\,\Phi\cdot\nu_{\partial\Omega} \de\Ha^1\,,
\end{equation}
where $\Phi\cdot\nu_{\partial\Omega} (x) = \text{\rm ap -}\lim_{y\to x} ( \Phi(y)\cdot \nu_\gamma(x) ) $ at $\Ha^1$-a.e. $x \in \partial\Omega$.
\end{lemma}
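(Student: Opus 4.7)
My plan is to establish \eqref{thm-div} by approximating $\Phi$ with smooth vector fields through mollification, invoking the classical divergence theorem for $BV$ functions against smooth fields, and then passing to the limit using the approximate regularity of $\Phi$ in the appropriate places. Since $\partial\Omega$ is Lipschitz, I would first extend $\Phi$ to a bounded measurable vector field on a neighborhood of $\overline\Omega$ (for instance by reflection across $\partial\Omega$) in such a way that its distributional divergence stays bounded. Let $\rho_\varepsilon$ be a standard symmetric mollifier and set $\Phi_\varepsilon \eqdef \Phi * \rho_\varepsilon$; for $\varepsilon$ small, $\Phi_\varepsilon \in C^\infty(\overline\Omega;\R^2)$ and the classical integration by parts formula for $BV$ functions against smooth fields (see e.g.~\cite[Theorem 3.88]{afp}) yields
\begin{equation*}
    \int_{\Omega} \Phi_\varepsilon\cdot Du \;=\; -\int_{\Omega} u\, \div \Phi_\varepsilon\,\de x \;-\; \int_{\partial\Omega} \tr_\Omega u\,\, \Phi_\varepsilon\cdot \nu_{\partial\Omega}\,\de\Ha^1.
\end{equation*}

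The bulk limit is immediate: one has $\div \Phi_\varepsilon = (\div\Phi)*\rho_\varepsilon \to \div \Phi$ pointwise a.e.\ with the uniform bound $\|\div\Phi_\varepsilon\|_{L^\infty} \le \|\div\Phi\|_{L^\infty}$, so dominated convergence (with $u\in L^1(\Omega)$) gives $\int_\Omega u\,\div\Phi_\varepsilon \de x \to \int_\Omega u\,\div\Phi \de x$. The boundary limit is where the approximate regularity enters for the first time: $\partial\Omega$ is covered by finitely many Lipschitz curves in $\overline\Omega$, hence by the approximate regularity assumption $\Phi_\varepsilon(x)\cdot \nu_{\partial\Omega}(x) \to \Phi(x)\cdot \nu_{\partial\Omega}(x)$ at $\Ha^1$-a.e.\ $x \in \partial\Omega$ (this is essentially a Lebesgue-point statement for the pointwise normal component proved via the Fubini theorem), and the uniform bound $|\Phi_\varepsilon|\le \|\Phi\|_\infty$ together with the integrability hypothesis $\tr_\Omega u\,\Phi \in L^1(\partial\Omega,\Ha^1)$ lets me pass to the limit by dominated convergence.

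The main obstacle is the convergence of $\int_\Omega \Phi_\varepsilon\cdot Du$ to $\int_\Omega \Phi\cdot Du$, which I would tackle via the Radon--Nikodym decomposition $Du = D^a u + D^j u + D^c u$. On the absolutely continuous part dominated convergence is again straightforward. On the jump part, concentrated on the countably $\Ha^1$-rectifiable jump set $J_u$ which is covered (up to an $\Ha^1$-negligible set) by Lipschitz curves, the approximate regularity forces $\Phi_\varepsilon\cdot \nu_{J_u} \to \Phi\cdot \nu_{J_u}$ $\Ha^1$-a.e.\ on $J_u$, so dominated convergence closes this piece. The Cantor part is the most delicate point: $|D^c u|$ may be carried by sets that are not the image of a single Lipschitz curve, but they still possess a $1$-rectifiable structure probed by Lipschitz foliations, and invoking the ``for every Lipschitz curve $\gamma$'' clause of the approximate regularity together with a slicing argument shows that the normal trace of $\Phi_\varepsilon$ converges $|D^c u|$-a.e.\ to the pointwise normal trace of $\Phi$. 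Summing the three contributions and comparing with the mollified identity produces \eqref{thm-div}. I note that in all applications of this lemma in the remainder of the paper $u$ is a characteristic function, so $D^c u \equiv 0$ and only the much simpler bulk, boundary, and jump limit passages need to be invoked.
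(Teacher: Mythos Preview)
The paper does not prove this lemma: it is quoted verbatim from \cite[Lemma~2.4]{AlBuDa} and used as a black box, so there is no ``paper's own proof'' to compare against. Your mollification scheme is a reasonable outline and does recover the statement in the only case the paper ever uses it (namely $u=\chi_E$, where $D^c u=0$), but as a proof of the lemma in the stated generality it has a genuine gap.

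The problematic step is the passage to the limit on the Cantor part $D^c u$. Approximate regularity is a hypothesis \emph{only} on Lipschitz curves: it guarantees that the normal component of $\Phi$ has a $2$-dimensional approximate limit at $\Ha^1$-a.e.\ point of any Lipschitz curve. This is exactly what is needed for the boundary term and for the jump part $D^j u$, whose support is countably $\Ha^1$-rectifiable. But $|D^c u|$ is by definition singular with respect to $\Ha^1\res J_u$ and is \emph{not} carried by a $1$-rectifiable set, so the hypothesis gives no direct control of $\Phi_\varepsilon\cdot\sigma$ at $|D^c u|$-a.e.\ point. Your appeal to a ``$1$-rectifiable structure probed by Lipschitz foliations'' and an unspecified slicing argument is not a proof; the Cantor part is precisely the diffuse singular piece that such curve-based arguments do not reach. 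A correct treatment (as in \cite{AlBuDa}) typically avoids pointwise convergence on the support of $D^c u$ altogether, working instead through an Anzellotti-type pairing between bounded divergence-measure fields and $BV$ functions, where the Cantor contribution is handled by density of smooth functions in $BV$ in a suitable weak sense rather than by pointwise convergence of the mollified field. A secondary issue is your first step: extending $\Phi$ across a merely Lipschitz boundary by reflection while keeping $\div\Phi\in L^\infty$ is not automatic and would itself need justification.
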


In what follows we will apply \cref{divergence-theorem} to characteristic functions
of sets of finite perimeter.

\begin{cor}\label{null-lagrangian}
Let $\Omega\subset\mathbb{R}^2$ be a bounded open set with Lipschitz 
boundary
and let $\mathbf{E},\widetilde{\mathbf{E}}\in \mathcal{A}$. Let $\Phi: \overline{\Omega}\to\mathbb{R}^2$ be an approximately regular vector field with $\div \Phi = 0$.
Then 
\begin{equation*}
 \int_{\Omega} \Phi\cdot D\chi_{\widetilde{E}_i}=
 \int_{\Omega} \Phi\cdot D\chi_{E_i}\,,
\end{equation*}
for any $i=1,2,3$.
\end{cor}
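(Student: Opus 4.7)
The plan is to apply the divergence theorem from \cref{divergence-theorem} separately to the characteristic functions $u=\chi_{E_i}$ and $u=\chi_{\widetilde{E}_i}$, exploit the hypothesis $\div\Phi=0$ to kill the interior term, and then conclude via the trace condition embedded in the definition of $\mathcal{A}$.

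First I would verify that the hypotheses of \cref{divergence-theorem} are satisfied in this setting. Since $\mathbf{E}$ and $\widetilde{\mathbf{E}}$ are Caccioppoli partitions of $\Omega$, each characteristic function $\chi_{E_i},\chi_{\widetilde{E}_i}$ lies in $BV(\Omega)$. The condition $\div\Phi\in L^\infty(\Omega)$ is immediate from $\div\Phi=0$. Moreover, $\Phi$ is bounded because it is approximately regular, and the traces $\tr_\Omega\chi_{E_i}, \tr_\Omega\chi_{\widetilde{E}_i}$ take values in $\{0,1\}$ for $\Ha^1$-a.e. point on $\partial\Omega$; since $\partial\Omega$ has finite $\Ha^1$-measure by Lipschitz regularity, the products $\tr_\Omega\chi_{E_i}\,\Phi$ and $\tr_\Omega\chi_{\widetilde{E}_i}\,\Phi$ are bounded, hence in $L^1(\partial\Omega,\Ha^1)$.

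Applying formula \eqref{thm-div} with $u=\chi_{E_i}$ and using $\div\Phi=0$ gives
\[
\int_\Omega \Phi\cdot D\chi_{E_i} \;=\; -\int_{\partial\Omega} \tr_\Omega\chi_{E_i}\,\,\Phi\cdot\nu_{\partial\Omega}\,\mathrm{d}\Ha^1,
\]
and the entirely analogous identity holds with $E_i$ replaced by $\widetilde{E}_i$. Since both $\mathbf{E}$ and $\widetilde{\mathbf{E}}$ belong to $\mathcal{A}$, we have $\tr_\Omega\chi_{E_i}=\tr_\Omega\chi_{\widetilde{E}_i}$ $\Ha^1$-a.e. on $\partial\Omega$, so the right-hand sides of the two identities coincide. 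Equating the corresponding left-hand sides yields the claim for each $i\in\{1,2,3\}$.

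I do not anticipate any real obstacle in this argument: it is essentially a one-line consequence of the divergence theorem once the hypotheses are checked. The only mild point worth noting is that \cref{divergence-theorem} is stated for a scalar $BV$ function, but here we genuinely only need to apply it to the scalar functions $\chi_{E_i}$ and $\chi_{\widetilde{E}_i}$ componentwise in $i$, so no vectorial upgrade of the divergence theorem is required.
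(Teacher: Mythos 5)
Your argument is correct and coincides with the paper's own proof: both apply \cref{divergence-theorem} to $\chi_{E_i}$ and $\chi_{\widetilde{E}_i}$, use $\div\Phi=0$ to reduce each identity to a boundary integral, and conclude from the equality of traces in the definition of $\mathcal{A}$. Your additional verification of the hypotheses of \cref{divergence-theorem} is a harmless elaboration of the same route.
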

\begin{proof}
Using the fact that $\div \Phi = 0$, formula~\eqref{thm-div} applied with $u=\chi_{\widetilde{E}_i}$ or $u= \chi_{E_i}$ reduces to
\begin{align*}
\int_{\Omega} \Phi\cdot D\chi_{\widetilde{E}_i}=-\int_{\partial\Omega} \tr_\Omega\chi_{\widetilde{E}_i}\,\,\Phi\cdot\nu_{\partial\Omega} \de\Ha^1\,,\\
\int_{\Omega} \Phi\cdot D\chi_{E_i}=-\int_{\partial\Omega} \tr_\Omega\chi_{E_i}\,\,\Phi\cdot\nu_{\partial\Omega} \de\Ha^1\,,
\end{align*}
for any $i=1,2,3$. By definition of the set $\mathcal{A}$, the two right hand sides above are equal, and the claim follows.
\end{proof}

Roughly speaking, the next lemma states the well-known fact that if a piecewise smooth vector field is such that its normal component along its jump set is well-defined, then the distributional divergence is simply given by the pointwise divergence of the field computed where it is smooth. We provide a proof for the convenience of the reader.

\begin{lemma}\label{lem:DivergenceNormalComponents}
Let $\Omega$ be a bounded open set with Lipschitz boundary. Let $\Psi:\overline\Omega\to\R^2$ be a measurable bounded vector field. Suppose that that there exists a Caccioppoli partition $\mathbf{F}=(F_1,\ldots,F_n)$ of $\Omega$, where $F_i$ is open with Lipschitz boundary, such that $\Psi|_{F_i} \in C^1(F_i)$ with $\div(\Psi|_{F_i}) \in L^\infty(F_i)$ and
\begin{equation}\label{condizione-tracce}
    \tr_{F_i}(\Psi|_{F_i}) \cdot \nu_i = - \tr_{F_j}(\Psi|_{F_j}) \cdot \nu_j,
\end{equation}
for any $i\neq j$, at $\Ha^1$-a.e. point on $\partial^*F_i\cap \partial^* F_j$, where $\nu_i$ is the $\Ha^1$-a.e. defined outer normal to $F_i$.

Then the distributional divergence of $\Psi$ is given by
\begin{equation*}
    \div\Psi = \sum_{i=1}^n \chi_{F_i}\div(\Psi|_{F_i}) \de x.
\end{equation*}
If also $\div(\Psi|_{F_i})=0$ on $F_i$ for any $i$, then $\div \Psi =0$.
\end{lemma}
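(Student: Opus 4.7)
The plan is to test the distributional divergence of $\Psi$ against an arbitrary $\varphi \in C^1_c(\Omega)$ and show
\[
-\int_\Omega \nabla\varphi\cdot\Psi \de x = \sum_{i=1}^n \int_{F_i} \varphi \, \div(\Psi|_{F_i}) \de x.
\]
First I would split the left hand side as a sum over the pieces of the partition, writing $\int_\Omega \nabla\varphi\cdot\Psi \de x = \sum_{i=1}^n \int_{F_i} \nabla\varphi\cdot \Psi|_{F_i} \de x$, which is justified because $\mathbf{F}$ is a Caccioppoli partition of $\Omega$, so the $F_i$'s cover $\Omega$ up to a negligible set, and $\Psi$ is bounded hence in $L^1_{\mathrm{loc}}$.

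Next, on each open set $F_i$ with Lipschitz boundary, I would apply the Gauss--Green formula to the $C^1$ field $\varphi \, \Psi|_{F_i}$ (for which the normal trace $\tr_{F_i}(\Psi|_{F_i})\cdot \nu_i$ is well-defined $\Ha^1$-a.e. on $\partial^* F_i$ thanks to $\Psi$ being bounded with $L^\infty$ divergence on $F_i$). This yields
\[
\int_{F_i} \nabla\varphi\cdot \Psi|_{F_i}\de x = -\int_{F_i} \varphi\, \div(\Psi|_{F_i}) \de x + \int_{\partial^* F_i} \varphi\, \tr_{F_i}(\Psi|_{F_i})\cdot\nu_i \de\Ha^1.
\]
Since $\varphi \in C^1_c(\Omega)$ vanishes on $\partial\Omega$, and up to $\Ha^1$-negligible sets $\partial^* F_i\cap \Omega$ is the disjoint union of the interfaces $\Sigma_{ij}=\partial^* F_i\cap \partial^* F_j$ for $j\neq i$, the boundary integrals above only pick up contributions from the $\Sigma_{ij}$'s.

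Summing over $i$ and grouping the pairs $\{i,j\}$, the total contribution of each interface $\Sigma_{ij}$ is
\[
\int_{\Sigma_{ij}} \varphi \bigl(\tr_{F_i}(\Psi|_{F_i})\cdot \nu_i + \tr_{F_j}(\Psi|_{F_j})\cdot \nu_j\bigr)\de\Ha^1,
\]
which vanishes by the trace condition \eqref{condizione-tracce}. Therefore
\[
\int_\Omega \nabla\varphi\cdot \Psi\de x = -\sum_{i=1}^n \int_{F_i} \varphi\, \div(\Psi|_{F_i}) \de x,
\]
which is exactly the identity defining $\div\Psi = \sum_i \chi_{F_i}\div(\Psi|_{F_i})$ as a distribution. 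The additional assertion that $\div\Psi=0$ when each pointwise divergence vanishes is then immediate. The only delicate point is the applicability of the Gauss--Green formula on the Lipschitz pieces $F_i$ with the claimed form of the boundary term, but this follows from standard trace theory for bounded vector fields with $L^\infty$ divergence on Lipschitz domains and the hypothesis that the relevant normal traces $\tr_{F_i}(\Psi|_{F_i})\cdot\nu_i$ exist $\Ha^1$-a.e. on $\partial^* F_i$.
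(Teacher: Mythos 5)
Your proposal is correct and follows essentially the same route as the paper: integrate by parts on each Lipschitz piece $F_i$, observe that the test function kills the contribution on $\partial\Omega$, and cancel the interface terms pairwise using the trace condition \eqref{condizione-tracce}. The only cosmetic difference is that the paper packages the Gauss--Green step through its divergence theorem for approximately regular vector fields (\cref{divergence-theorem}), applied to the extension of $\Psi|_{F_i}$ by its inner $BV$ trace, whereas you invoke the standard normal-trace theory for bounded fields with $L^\infty$ divergence; both yield the same boundary term and the same cancellation.
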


\begin{proof}
By assumptions, we see that $\Psi|_{F_i} \in [BV(F_i)]^2$. Since $BV$ functions admit inner traces on boundaries of Lipschitz domains, we see that the field $\Psi_i:\overline{F_i}\to \R^2$ defined by
\[
\Psi_i(x) \eqdef \begin{cases}
\Psi(x) & x \in F_i,\\
\tr_{F_i} (\Psi|_{F_i}) & \Ha^1\text{-a.e. on $\partial^*F_i$,}
\end{cases}
\]
and arbitrarily defined on $\partial F_i$ where $\tr_{F_i} \Psi|_{F_i}$ does not exist, is approximately regular.

Let $u \in C^1_c(\Omega)$. Applying \cref{divergence-theorem} on each $F_i$, we can compute
\[
\begin{split}
    \int_\Omega \Psi\cdot\nabla u \de x 
    &= \sum_i \int_{F_i} \nabla u \cdot \Psi_i \de x 
    =  -\sum_i \int_{F_i} u \, \div (\Psi|_{F_i}) \de x + \sum_i \int_{\partial^*F_i} u \,\Psi_i \cdot \nu_i \de \Ha^1\\
    &= -\sum_i \int_{F_i} u \, \div (\Psi|_{F_i}) \de x .
\end{split}
\]
\end{proof}

\begin{defn}[Local paired calibration]\label{paired}
A \emph{local paired calibration} for a Caccioppoli partition $\mathbf{E}=(E_1,E_2,E_3)$ is a collection of three approximately regular 
vector fields $\Phi_1,\Phi_2,\Phi_3: \overline{\Omega} \rightarrow \R^2$ such that
\begin{enumerate}[label={\normalfont{\color{blue}\arabic*)}}]
\item \label{it:r1} $\div \Phi_i = 0$ \quad for $i=1,2,3$,
\item \label{it:r2}$\vert \Phi_i - \Phi_j\vert \leq 1$ \quad $\Ha^1$-a.e. in $\Omega$, for $i,j =1,2,3$, $i\neq j$,
\item \label{it:r3}$(\Phi_i - \Phi_j)\cdot \nu_{ij} = 1$ \quad $\Ha^1$-a.e. in  $\Sigma_{ij}$, for $i,j =1,2,3$, $i\neq j$.
\end{enumerate}  
\end{defn} 

\begin{rem}
The concept of local paired calibration is nothing but the notion
of paired calibration introduced by Lawlor and Morgan in~\cite{lawmor}
restricted to the case of a partition of $\Omega$ composed of three sets.
\end{rem}

\begin{rem}\label{differenze}
Let $\Omega$ be bounded, let $\mathbf{E} \in \mathcal{A}$ and assume that there exist approximately regular fields $\Psi_{12},\Psi_{23},\Psi_{31}:\overline\Omega\to \R^2$ such that
\begin{itemize}
    \item $\div \Psi_{12}=\div\Psi_{23}=\div\Psi_{31}=0$,
    \item $|\Psi_{12}|,|\Psi_{23}|,|\Psi_{31}|\le 1$  $\Ha^1$-a.e. in $\Omega$,
    \item $\Psi_{ij}\cdot \nu_{ij}=1$ $\Ha^1$-a.e. in $\Sigma_{ij}$, for $i,j=1,2,3$ such that $\Psi_{ij}$ is defined,
    \item $\Psi_{12}+\Psi_{23}+\Psi_{31}=0$ $\Ha^1$-a.e. in $\Omega$.
\end{itemize}
Then there exists a local paired calibration for $\mathbf{E}$.

Indeed, fix an arbitrary approximately regular field $\Phi_1$ with $\div \Phi_1=0$, for example, $\Phi_1(x,y)\eqdef (0,0)|_{\overline\Omega}$. Hence set $\Phi_2\eqdef \Phi_1-\Psi_{12}$ and $\Phi_3\eqdef \Psi_{31}+\Phi_1$. Then $\Phi_i-\Phi_j = \Psi_{ij}$ $\Ha^1$-a.e. in $\Omega$.
Hence the properties of $\Psi_{12},\Psi_{23},\Psi_{31}$ imply that $\Phi_1,\Phi_2,\Phi_3$ give a local paired calibration for $\mathbf{E}$.
\end{rem}

The next lemma states that existence of a local paired calibration for a partition implies that it is minimizing. We provide a detailed proof which formalizes the general principle introduced in \cite[Section 1.1, Section 1.2]{lawmor}.

\begin{lemma}
If $\Phi$ is a local paired calibration for $\widetilde{\mathbf{E}}\in \mathcal{A}$,
then $\widetilde{\mathbf{E}} \in \mathcal{A}$ is a minimizer of $\mathcal{P}$ in $\Omega$.
\end{lemma}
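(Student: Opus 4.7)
The plan is to adapt the classical paired-calibration argument of Lawlor and Morgan: the idea is to exhibit a linear functional on the admissible class $\mathcal{A}$ that agrees with $\mathcal{P}$ on $\widetilde{\mathbf{E}}$ and is a lower bound for $\mathcal{P}$ on every other element of $\mathcal{A}$. Concretely, I would consider the map $\mathbf{E}\mapsto -\sum_{i=1}^3\int_\Omega \Phi_i\cdot D\chi_{E_i}$. Property \ref{it:r1} together with \cref{null-lagrangian} (applicable since each $\Phi_i$ is approximately regular, bounded and divergence-free) immediately ensures that this functional takes the same value on any two partitions in $\mathcal{A}$, so it suffices to prove the identity $\mathcal{P}(\widetilde{\mathbf{E}})=-\sum_{i=1}^3\int_\Omega \Phi_i\cdot D\chi_{\widetilde E_i}$ and, for an arbitrary $\mathbf{E}\in\mathcal{A}$, the inequality $-\sum_{i=1}^3\int_\Omega \Phi_i\cdot D\chi_{E_i}\le \mathcal{P}(\mathbf{E})$.

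Both of these will follow from the same combinatorial re-pairing of interface contributions. Recalling that $D\chi_{E_i}=-\nu_i\,\Ha^1\llcorner \partial^*E_i$ (with $\nu_i$ the outer unit normal, as in the paper's convention), using that $\partial^*E_i\cap\Omega=\bigcup_{j\neq i}\Sigma_{ij}$ up to $\Ha^1$-null sets in the three-set case, and exploiting $\nu_{ij}=-\nu_{ji}$, I would obtain the identity
\[
-\sum_{i=1}^3\int_\Omega \Phi_i\cdot D\chi_{E_i}=\sum_{i=1}^3\sum_{j\neq i}\int_{\Sigma_{ij}}\Phi_i\cdot\nu_{ij}\de\Ha^1=\sum_{1\le i<j\le 3}\int_{\Sigma_{ij}}(\Phi_i-\Phi_j)\cdot\nu_{ij}\de\Ha^1,
\]
valid for every $\mathbf{E}\in\mathcal{A}$.

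Applied to $\widetilde{\mathbf{E}}$, property \ref{it:r3} forces the last integrand to equal $1$ $\Ha^1$-a.e. on each $\widetilde\Sigma_{ij}$, so the right-hand side becomes exactly $\mathcal{P}(\widetilde{\mathbf{E}})$. Applied to a generic competitor $\mathbf{E}\in\mathcal{A}$, the pointwise bound $(\Phi_i-\Phi_j)\cdot\nu_{ij}\le |\Phi_i-\Phi_j|\le 1$ (where the last step uses property \ref{it:r2}) shows that the same expression is dominated by $\mathcal{P}(\mathbf{E})$. Chaining the equality for $\widetilde{\mathbf{E}}$, the null-Lagrangian identity of \cref{null-lagrangian} applied to each $\Phi_i$, and the inequality for $\mathbf{E}$ yields $\mathcal{P}(\widetilde{\mathbf{E}})\le \mathcal{P}(\mathbf{E})$. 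I expect the only delicate point to be the careful bookkeeping of the orientation of $\nu_{ij}$ and of the sign in $D\chi_{E_i}$ throughout the re-pairing, so that the clean symmetric integrand $(\Phi_i-\Phi_j)\cdot\nu_{ij}$ appears; once this is sorted out, the argument is essentially immediate.
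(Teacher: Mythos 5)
Your proposal is correct and follows essentially the same route as the paper's proof: condition \ref{it:r3} identifies $\mathcal{P}(\widetilde{\mathbf{E}})$ with the flux functional $\sum_{i<j}\int_{\Sigma_{ij}}(\Phi_i-\Phi_j)\cdot\nu_{ij}\,\mathrm{d}\Ha^1$, condition \ref{it:r1} via \cref{null-lagrangian} makes that functional constant on $\mathcal{A}$, and condition \ref{it:r2} bounds it by $\mathcal{P}(\mathbf{E})$ for any competitor. Your explicit bookkeeping of the sign in $D\chi_{E_i}=-\nu_i\,\Ha^1\res\partial^*E_i$ is consistent and correct (the overall sign is immaterial to the argument since it is carried through the null-Lagrangian identity unchanged).
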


\begin{proof}
Using the third condition in the definition of local paired calibration, denoting by $\widetilde{\nu}_{ij}$ the usual normal at the interfaces of $\widetilde{\mathbf{E}}$, and
the structure of Caccioppoli partitions~\cite[Theorem 4.17]{afp} we have
\begin{align*}
\mathcal{P}(\widetilde{\mathbf{E}})&=\frac12\sum_{i=1}^3 P(\widetilde{E}_i,\Omega)
=\Ha^1(\widetilde{\Sigma}_{12}\cap \Omega)+\Ha^1(\widetilde{\Sigma}_{23}\cap \Omega)+\Ha^1(\widetilde{\Sigma}_{31}\cap \Omega)\\
& \overset{\ref{it:r3}}{=}\int_{\widetilde{\Sigma}_{12}\cap\Omega}(\Phi_{1}-\Phi_{2})\cdot\widetilde\nu_{12}\,\,\mathrm{d}\Ha^1
+\int_{\widetilde{\Sigma}_{23}\cap\Omega}(\Phi_{2}-\Phi_{3})\cdot\widetilde\nu_{23}\,\,\mathrm{d}\Ha^1
+\int_{\widetilde{\Sigma}_{31}\cap\Omega}(\Phi_{3}-\Phi_{1})\cdot\widetilde\nu_{31}\,\,\mathrm{d}\Ha^1\\
&=  \sum_{i=1}^3 \int_{\Omega} \Phi_i\cdot D\chi_{\widetilde{E}_i}\,.
\end{align*}
Thanks to the divergence free condition on the vector fields we can apply 
\cref{null-lagrangian} getting
\begin{align*}
 \sum_{i=1}^3 \int_{\Omega} \Phi_i\cdot D\chi_{\widetilde{E}_i}
    \overset{\ref{it:r1}}{=} \sum_{i=1}^3 \int_{\Omega} \Phi_i\cdot D\chi_{E_i}\,.
\end{align*}
To conclude we use the second condition in the definition of local paired calibration to get
\begin{align*}
\sum_{i=1}^3 \int_{\Omega} &\Phi_i\cdot D\chi_{E_i}
\\
&=\int_{{\Sigma}_{12}\cap\Omega}(\Phi_{1}-\Phi_{2})\cdot\nu_{12}\,\,\mathrm{d}\Ha^1
+\int_{{\Sigma}_{23}\cap\Omega}(\Phi_{2}-\Phi_{3})\cdot\nu_{23}\,\,\mathrm{d}\Ha^1
+\int_{{\Sigma}_{31}\cap\Omega}(\Phi_{3}-\Phi_{1})\cdot\nu_{31}\,\,\mathrm{d}\Ha^1\\
&\leq \int_{{\Sigma}_{12}\cap\Omega}\vert\Phi_{1}-\Phi_{2}\vert\,\,\mathrm{d}\Ha^1
+\int_{{\Sigma}_{23}\cap\Omega}\vert\Phi_{2}-\Phi_{3}\vert\,\,\mathrm{d}\Ha^1
+\int_{{\Sigma}_{31}\cap\Omega}\vert\Phi_{3}-\Phi_{1}\vert\,\,\mathrm{d}\Ha^1\\
&\overset{\ref{it:r2}}{\leq} \Ha^1(\Sigma_{12}\cap \Omega)+\Ha^1(\Sigma_{23}\cap \Omega)+\Ha^1(\Sigma_{31}\cap \Omega)
=\frac{1}{2}\sum_{i=1}^3 P(E_i,\Omega)=\mathcal{P}(\mathbf{E})\,.
\end{align*}
Following the chain of inequalities we have $\mathcal{P}(\widetilde{\mathbf{E}})\leq \mathcal{P}(\mathbf{E})$ as desired.
\end{proof}

We conclude by proving the \emph{local} minimality of a partition induced by a minimal network.

\begin{teo}\label{esibizione-calibrazione}
Let $\Gamma_*:G\to \R^2$ be a minimal network such that $\Gamma_*(G)\subset D$ where $D$ is a domain of class $C^1$ homeomorphic to a closed disk, and $\Gamma_*(G) \cap \partial D$ is the set of endpoints of $\Gamma_*$.

Then there exists a bounded open set $\Omega'\subset \R^2$ such that $\Gamma_*(G)\subset \Omega'$, $\Omega\eqdef \Omega'\cap {\rm int\,}(D)$ has Lipschitz boundary, there exists a Caccioppoli partition $\widetilde{\mathbf{E}}=(\widetilde{E}_1,\widetilde{E}_2,\widetilde{E}_3)$ of $\Omega$ such that $\Omega \cap \cup_i\partial\widetilde{E}_i =\Omega \cap \Gamma_*(G)$ and there exists a local paired calibration for $\widetilde{\mathbf{E}}$ in $\Omega$.

In particular, $\widetilde{\mathbf{E}}$ is a minimizer for $\mathcal{P}$ in $\mathcal{A}$, that is the class of partitions having the same trace of $\widetilde{\mathbf{E}}$ on $\partial\Omega$.
\end{teo}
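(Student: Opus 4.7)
The plan is to construct $\Omega$ as a small tubular neighborhood of $\Gamma_*(G)$ intersected with $\mathrm{int}(D)$, to define $\widetilde{\mathbf E}$ by a proper 3-coloring of the components of $\Omega \setminus \Gamma_*(G)$, and to build a local paired calibration using \cref{differenze}.

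\emph{Choice of $\Omega$ and of the partition.} Up to an isometry of $\R^2$, I may assume that every edge of $\Gamma_*$ is parallel to one of $g_1,g_2,g_3$. Take $\Omega'\eqdef \{x\in\R^2 : d(x,\Gamma_*(G)) < \epsilon\}$ with $\epsilon > 0$ smaller than half the minimum edge length of $\Gamma_*$ and half the minimum distance between non-adjacent pieces of $\Gamma_*$; set $\Omega\eqdef \Omega'\cap \mathrm{int}(D)$. The piecewise linear structure of $\Gamma_*$ together with the $C^1$ regularity of $\partial D$ near each endpoint gives that $\partial\Omega$ is Lipschitz for $\epsilon$ small. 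Then I define $\widetilde{\mathbf E} = (\widetilde E_1, \widetilde E_2, \widetilde E_3)$ by choosing a proper 3-coloring of the connected components of $\Omega \setminus \Gamma_*(G)$ (i.e., adjacent components get distinct colors), setting $\widetilde E_i$ as the union of components coloured $i$. Such a global coloring exists: locally, the three sectors at every triple junction can be given three distinct colors in a cyclic order dictated by the outgoing tangent vectors, and this local assignment can be propagated globally because the graph $G$ is bipartite. The bipartiteness follows from the fact that every simple cycle of $G$ has even length: a cycle of length $n$ in the planar embedding has $k$ interior angles equal to $120^\circ$ and $n-k$ equal to $240^\circ$, and the identity $120 k+240(n-k) = 180(n-2)$ forces $n$ to be even. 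By construction $\Omega \cap \bigcup_i \partial \widetilde E_i = \Omega \cap \Gamma_*(G)$.

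\emph{Construction of the calibration.} By \cref{differenze}, it suffices to exhibit three approximately regular, divergence-free fields $\Psi_{12},\Psi_{23},\Psi_{31}:\overline\Omega\to\R^2$ with $|\Psi_{ij}|\le 1$, $\Psi_{ij}\cdot \nu_{ij}=1$ on $\Sigma_{ij}$, and $\Psi_{12}+\Psi_{23}+\Psi_{31}=0$ almost everywhere. I would construct them as piecewise constant fields on a finite cell decomposition of $\Omega$: on the thin cell surrounding each network edge $e$ of direction $g_k$ (with $\{i,j,k\}=\{1,2,3\}$), let $\Psi_{ij}$ be the unit vector perpendicular to $g_k$ oriented so as to agree with the partition normal $\nu_{ij}$ on $e$. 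The cell decomposition is completed by adding finitely many auxiliary straight segments (parallel to $g_1,g_2,g_3$) cutting $\Omega$, on each of whose cells the three $\Psi_{ij}$ are assigned constant unit vectors taken from the $120^\circ$-frame $\{\pm g_\ell^\perp\}_{\ell=1,2,3}$ so that the zero-sum condition $\Psi_{12}+\Psi_{23}+\Psi_{31}=0$ holds identically. Across the auxiliary internal boundaries, the choices are made so that the normal components match, whereby \cref{lem:DivergenceNormalComponents} yields $\div\Psi_{ij}=0$ in the distributional sense. The calibration conditions are then readily checked directly on each cell.

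\emph{Main obstacle and conclusion.} The delicate step is the global combinatorial consistency of the assignment described above, particularly around cycles of $\Gamma_*$. The combination of the bipartite structure of $G$ (so that triple junctions alternate between the two types characterized by outgoing tangents in $\{g_1,g_2,g_3\}$ versus $\{-g_1,-g_2,-g_3\}$) and the rigidity that edges appear only in three directions is exactly what allows the piecewise constant $\Psi_{ij}$ to close up consistently. The smallness of the tube parameter $\epsilon$ is essential: it guarantees that the cell decomposition is combinatorially tractable and avoids the topological issues arising in larger neighborhoods (compare with the counterexample mentioned in Remark~\ref{rem:LocalitaRisultatoPartizioni}). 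Once the $\Psi_{ij}$'s are built, \cref{differenze} gives a local paired calibration for $\widetilde{\mathbf E}$, and the preceding lemma yields the minimality of $\widetilde{\mathbf E}$ for $\mathcal P$ in $\mathcal A$.
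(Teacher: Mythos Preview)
Your high-level plan matches the paper's: take a thin tubular neighborhood, 3-color the complementary regions, and build a piecewise-constant paired calibration via \cref{differenze} and \cref{lem:DivergenceNormalComponents}. Your argument that every simple cycle in $G$ has even length (hence $G$ is bipartite) is correct and is a nice alternative to the paper's route, which instead shows that the face-adjacency graph of $D\setminus\Gamma_*(G)$ embeds in the planar triangular lattice and inherits its 3-coloring. You should, however, make explicit the classical step from ``$G$ bipartite'' to ``faces properly 3-colorable''; as written, the propagation argument is only asserted.

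The substantive gap is the calibration construction. You say the auxiliary segments are ``parallel to $g_1,g_2,g_3$'' and that the $\Psi_{ij}$ are ``unit vectors from the $120^\circ$-frame $\{\pm g_\ell^\perp\}$'', with the normal-matching condition \eqref{condizione-tracce} arranged ``by choice''. Neither of these matches what actually works, and you do not verify that such choices exist. In the paper the key device is different: along each edge joining two junctions one inserts, through the midpoint, two segments at $30^\circ$ to the edge (equivalently, \emph{perpendicular} to the other two $g_i$'s). These cut the tube into a junction cell on each side plus two thin triangles $R_{T_u},R_{T_d}$; in the triangles one of the $\Psi_{ij}$ is set to $0$ (not a unit vector), and it is precisely this vanishing that makes the normal components match across the $30^\circ$ lines. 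The smallness condition on the tube width ($\delta<\tfrac{\sqrt3}{8}d$) is what guarantees these $30^\circ$ lines meet $\partial\Omega'$ over the interior of the edge, so the four cells are well defined. Finally, the paper checks explicitly that this local recipe closes up around hexagonal cycles. Your proposal identifies this closing-up as ``the delicate step'' but does not carry it out; without the explicit cell geometry and the zero-field transition triangles, the divergence-free and zero-sum conditions cannot be simultaneously verified. Filling in this construction is exactly the content of the proof.
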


\begin{proof}
Let us assume that $G$ has at least two junctions. The cases with no junctions or with one junction are easier to treat with the construction outlined in the rest of the proof.

We first consider a simple configuration
in which $\Gamma_*(G)$
is composed of five curves that meet at two triple junctions. 
Let $\Gamma'_*(G)$ be the network having all edges equal to those of $\Gamma_*(G)$ except for those ending at an endpoint, 
which are 
lengthened by an additive length equal to $\delta'\in(0,1)$ (that will be suitably chosen later).
We first construct a calibration for the Caccioppoli partition identified by the new network $\Gamma'_*(G)$ containing $\Gamma_*(G)$, over an open set $\Omega'$ containing $\Gamma_*(G)$ (see \cref{fig:simple-minimal}). Eventually, in the general case, a restriction of both the partition and the calibration to $D$ will give the desired calibration of a partition of the final set $\Omega$. 
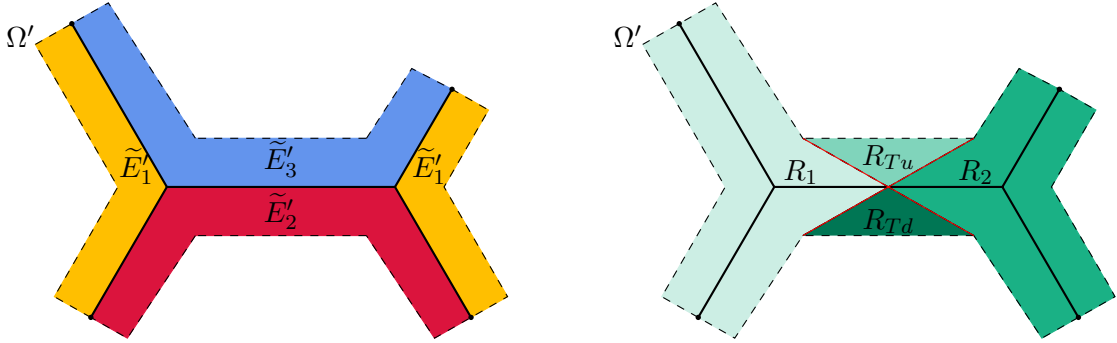
\begin{figure}[H]
    \centering
    \begin{tikzpicture}[scale=0.5]
\filldraw[amber]
(8,-3.46)
--(8.96,-2.91)--(7.29,0)--
 (7.29,0)--(8.46,2.04)--
(7.5,2.59) --(6,0);    
\filldraw[amber]
(-2,-3.46)--(-2.96,-2.91)--
(-1.29,0)--
(-3.46,3.78)--(-2.5,4.33)--(0,0);
\filldraw[crimson]
(-2,-3.46)--(-1.04,-4.01)--(0.75,-1.29)--(5.25,-1.29)--
(7.04,-4.01)--(8,-3.46)--(6,0)--(0,0)--(-2,-3.46);
\filldraw[cornflowerblue]
(5.25,1.29)--(0.75,1.29)--
(-1.54,4.88)-- (-2.5,4.33)--
(-2.5,4.33)--(0,0)--(6,0)--(7.5,2.59)--(6.44,3.14);
\draw[dashed]
(-2,-3.46)--(-2.96,-2.91)--
(-1.29,0)--(-3.46,3.78);
\draw[dashed]
(-3.46,3.78)--(-2.5,4.33)--
(-2.5,4.33)--(-1.54,4.88)--(0.75,1.29);
\draw[dashed]
(0.75,-1.29)--(-1.04,-4.01)--
(-2,-3.46);
\draw[dashed]
(5.25,1.29)--
 (6.44,3.14)--(7.5,2.59)--
(8.46,2.04)--
 (7.29,0)--(8.96,-2.91)--(8,-3.46)--
(8,-3.46)--(7.04,-4.01)--(5.25,-1.29);
\draw[thick]
(0,0)--(-2.5,4.33)
(0,0)--(-2,-3.46)
(0,0)--(6,0)
(6,0)--(7.5,2.59)
(6,0)--(8,-3.46);
\draw[dashed]
(5.25,1.29)--(0.75,1.29)
(5.25,-1.29)--(0.75,-1.29);
\fill[black](-2.5,4.33) circle (2pt);  
\fill[black](-2,-3.46) circle (2pt);  
\fill[black](7.5,2.59) circle (2pt);  
\fill[black](8,-3.46) circle (2pt); 
\path
(-4.5,4) node[right]{$\Omega'$}
(-0.8,-0.2) node[above]{$\widetilde{E}_1'$}
(6.9,-0.2) node[above]{$\widetilde{E}_1'$}
(3,-1.4) node[above]{$\widetilde{E}_2'$}
(3,0) node[above]{$\widetilde{E}_3'$};
\end{tikzpicture} \qquad\quad
\begin{tikzpicture}[scale=0.5]
\filldraw[green(munsell)!20!white]
(-3.46,3.78)--(-2.5,4.33)--
(-2.5,4.33)--(-1.54,4.88)--
(0.75,1.29)--(3,0)--(0.75,-1.29)--(-1.04,-4.01)--
(-2,-3.46)--(-2.96,-2.91)--
(-1.29,0)--(-3.46,3.78);
\filldraw[green(munsell)!50!white]
(5.25,1.29)--(3,0)--(0.75,1.29);
\filldraw[green(munsell)!90!white]
(3,0)--(5.25,1.29)--(6.44,3.14)--(7.5,2.59)--
(8.46,2.04)--
 (7.29,0)--
 (8.96,-2.91)--(8,-3.46)--
(7.04,-4.01)--
 (5.25,-1.29)--(3,0);
\filldraw[green(munsell)!70!black]
(5.25,-1.29)--(3,0)--(0.75,-1.29);
\draw[thick]
(0,0)--(-2.5,4.33)
(0,0)--(-2,-3.46)
(0,0)--(6,0)
(6,0)--(7.5,2.59)
(6,0)--(8,-3.46);
\draw[dashed]
(5.25,1.29)--(0.75,1.29)
(5.25,-1.29)--(0.75,-1.29);
\draw[dashed]
(-3.46,3.78)--(-2.5,4.33)--
(-2.5,4.33)--(-1.54,4.88)--
(0.75,1.29)--(3,0)--(0.75,-1.29)--(-1.04,-4.01)--
(-2,-3.46)--
(-2,-3.46)--(-2.96,-2.91)--
(-1.29,0)--(-3.46,3.78);
\draw[dashed]
(5.25,1.29)--(3,0)--(0.75,1.29);
\draw[dashed]
(3,0)--(5.25,1.29)--(6.44,3.14)--
 (7.5,2.59)--(8.46,2.04)--
 (7.29,0)--(8.79,-2.59)--
 (8.96,-2.91)--(8,-3.46)--
(7.04,-4.01)--
 (5.25,-1.29)--(3,0);
\draw[dashed]
(5.25,-1.29)--(3,0)--(0.75,-1.29);
\draw[red]
(3,0)--(0.75,-1.29)
(3,0)--(0.75,1.29)
(3,0)--(5.25,-1.29)
(3,0)--(5.25,1.29);
\fill[black](-2.5,4.33) circle (2pt);  
\fill[black](-2,-3.46) circle (2pt);  
\fill[black](7.5,2.59) circle (2pt);  
\fill[black](8,-3.46) circle (2pt); 
\path
(-4.5,4) node[right]{$\Omega'$}
(0.7,-0.2) node[above]{$R_1$}
(5.3,-0.2) node[above]{$R_2$}
(3,-1.5) node[above]{$R_{Td}$}
(3,0.2) node[above]{$R_{Tu}$};
\end{tikzpicture}    
\caption{Left: Partition $\widetilde{\mathbf{E}}'=(\widetilde{E}_1',\widetilde{E}_2',\widetilde{E}_3')$ of $\Omega'$. Right: Split of $\Omega'$ in four regions $R_1,R_2,R_{Tu},R_{Td}$.}
    \label{fig:simple-minimal}
\end{figure}
We call $O^1,O^2$ the two junctions and we let
$d$ be the distance between $O^1$ and $O^2$.
We define the set $\Omega'$ to be the $\delta$-tubular neighborhood of $\Gamma'_*(G)$ with $\delta=\frac{d\sqrt{3}}{8}$
truncated at each of the four  new endpoints with the line passing through the endpoint that is orthogonal to the corresponding edge.
We thus define the partition $\widetilde{E}_1',\widetilde{E}_2',\widetilde{E}_3'$ as in \cref{fig:simple-minimal} on the left.

Let $M$ be the midpoint between $O^1$ and $O^2$.
We consider two lines $\ell_1,\ell_2$ through $M$ forming an angle of $30$ degrees with the segment connecting $O^1$ and $O^2$.
By the choice of $\delta$, the two lines meet $\partial\Omega'$ at points whose projections on the edge $O^1\,O^2$ lie in the interior of the edge; hence the lines split $\Omega'$ in four open regions that we denote $R_1$, $R_2$, $R_{Tu}$, $R_{Td}$, as in \cref{fig:simple-minimal}.
Now we construct a calibration for $\widetilde{\mathbf{E}}'$.
Thanks to \cref{differenze} we can 
exhibit directly the ``difference fields''
$\Psi_{12}$, $\Psi_{23}$, $\Psi_{31}$. We define the fields pointwise on each region by
\begin{equation*}
\begin{array}{lllll}
&\textbf{Region $R_1$}:
&\Psi_{12} = \left(\sqrt{3}/2, -1/2\right) , & \Psi_{23} = (0, 1),& \Psi_{31} = \left(-\sqrt{3}/2, -1/2\right) \,,\\
&\textbf{Region $R_2$}:
&\Psi_{12}= \left(-\sqrt{3}/2, -1/2\right) ,& \Psi_{23} = (0, 1),& \Psi_{31} = \left(\sqrt{3}/2, -1/2\right) \,,\\
&\textbf{Region $R_{T_u}$}:
&\Psi_{12} = \left(0,0\right) , & \Psi_{23} = (0, 1),& \Psi_{31} = \left(0,-1\right) \,,\\
&\textbf{Region $R_{T_d}$}:
&\Psi_{12}= \left(0,-1\right) , & \Psi_{23} = (0, 1),& \Psi_{31} = \left(0,0\right) \,.
\end{array}
\end{equation*}
Moreover the fields $\Psi_{ij}$ are pointwise defined on $\partial\Omega' \cup \ell_1\cup\ell_2$ by taking traces from suitable corresponding regions.
More precisely, we set $\Psi_{ij}|_{\partial R \setminus\{M\}} =\tr_R (\Psi_{ij}|_R)$ for regions $R \in \{ R_1, R_2\}$, and  $\Psi_{ij}|_{\partial\Omega' \cap \overline{S} \setminus (\overline{R_1} \cup \overline{R_2})} = \tr_S (\Psi_{ij}|_S)$ for regions $S \in \{ R_{Tu}, R_{T_d}\}$, and $\Psi_{ij}(M)=0$.
\begin{figure}[H]
    \centering
\input{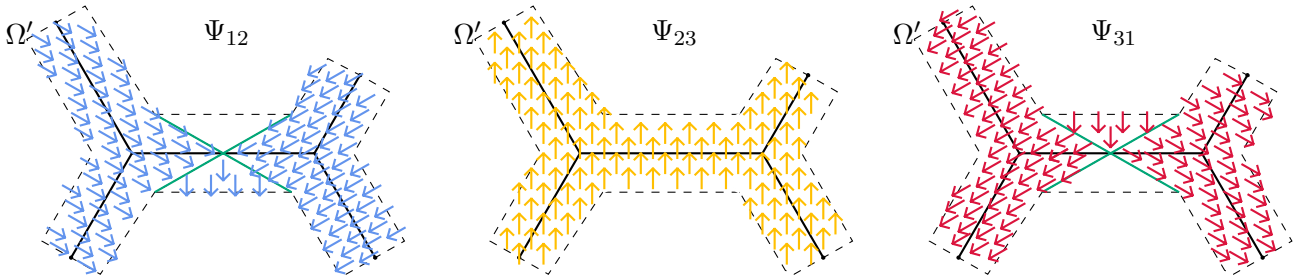}
    \caption{Picture of the three fields $\Psi_{12},\Psi_{23},\Psi_{31}$.}
    \label{fig:calibrazione}
\end{figure}
To prove that $\Psi_{12},\Psi_{23},\Psi_{31}$ induce
a local paired calibration, we check the four conditions
of \cref{differenze}.

The divergence free condition immediately follows from \cref{lem:DivergenceNormalComponents}. Indeed, the fields
are constant in the regions $R_1$, $R_2$, $R_{T_u}$ and $R_{T_d}$, so within each regions they have zero divergence.
Moreover one can easily prove that the condition~\eqref{condizione-tracce} is satisfied.
As an example we check it for $\Psi_{12}$ in the transition
between $R_1$ and $R_{T_u}$.

Since $\nu_{R_1}=(1/2,\sqrt{3}/2)$ and $\nu_{R_{T_u}}=-\nu_{R_1}=(-1/2,-\sqrt{3}/2)$, by direct computation we have:
\begin{align*}
   \tr_{R_1}(\Psi_{12}|_{R_1}) \cdot \nu_{R_1} =(\sqrt{3}/2,-1/2)\cdot (1/2,\sqrt{3}/2)=0\,,\\
   \tr_{R_{T_u}}(\Psi_{12}|_{R_{T_u}}) \cdot \nu_{R_{T_u}}
   =(0,0)\cdot (-1/2,-\sqrt{3}/2)=0
\end{align*}
hence $\tr_{R_1}(\Psi_{12}|_{R_1}) \cdot \nu_{R_1}=-\tr_{R_{T_u}}(\Psi_{12}|_{R_{T_u}}) \cdot \nu_{R_{T_u}}$.
The computations are completely analogous 
for all the other transitions and fields.\\
Moreover, we have $\vert \Psi_1\vert=\vert\Psi_2\vert=\vert\Psi_3\vert=1$.
Finally, the third and fourth condition of  \cref{differenze} are trivial to check.

\medskip

We now pass to the general case.  Let $\Gamma_*, D$ be as in the statement. Denote by $\{D_j\}$ the collection of the connected components of ${\rm int} \left( D\right)\setminus \Gamma_*(G)$. For any $j$, at most six edges of $\Gamma_*(G)$ are contained in the boundary $\partial D_j$, and in particular $\partial D_j$ is a hexagon whenever $\partial D_j \cap \partial D=\emptyset$. Indeed, if at least seven edges of $\Gamma_*(G)$ are contained in the boundary $\partial D_j$, since $D_j$ and $\partial D_j$ are connected, we can split $\partial D_j$ as a union $\partial D_j= \partial (\R^2 \setminus D_j) \cup \bigsqcup_k S_k^i$, where each $S^i_k$ is (the image of) a minimal network with only one endpoint, such endpoint being the intersection of $S^i_k$ with $\partial (\R^2 \setminus D_j)$. But minimal networks $S$ with only one endpoint do not exist, otherwise the distance function from the endpoint would not achieve a maximum on $S$.
Nonexistence of minimal networks with one endpoint also implies that if a path in ${\rm int}\left( D\right)$ crosses an edge of $\Gamma_*(G)$, then it passes from a component $D_j$ to a different component $D_{j'}$. Similarly, one notices that minimal networks with only two endpoints (and at least a triple junction) do not exist, and thus either $\partial D_i \cap \partial D_k=\emptyset$ or $\partial D_i \cap \partial D_k$ is an edge of $\Gamma_*(G)$, for $i\neq k$.

We define a planar graph $Q$ whose vertices are the components $\{D_j\}$ and we say that $D_j$ and $D_k$, for $j\neq k$, are connected by an edge if and only if $\partial D_i \cap \partial D_k$ is an edge of $\Gamma_*(G)$. By the above observations it readily follows that $Q$ is homeomorphic to a subset of the planar triangular graph $Q'$ identified by the lattice $\{ n(1,0) + m(1/2, \sqrt{3}/2)\st n,m \in \Z\}$. In fact, a homeomorphism can be constructed by iteration as follows. Starting from a first component $D_{j_1}$, if $x \in \partial D_{j_1}$ is a triple junction of $\Gamma_*(G)$, parametrizing a circle of small radius centered at $x$ starting from a point in $D_{j_1}$ identifies a triangle in $Q$. Repeating the construction for the other triple junctions lying on $\partial D_{j_1}$, which are six at most, and then iterating the construction on adjacent components eventually leads to the desired homeomorphism.

Since the triangular graph $Q'$ can be colored with three colors, there exists a Caccioppoli partition $\mathbf{F}=(F_1,F_2,F_3)$, consisting of just three sets, of $D$ such that ${\rm int\,}(D) \cap \Gamma_*(G)={\rm int\,}(D) \cap \cup_i \partial F_i$.

Now let $d$ be the minimal length among the edges of $\Gamma_*$. For $\delta\in(0,\frac{d\sqrt{3}}{8})$ small enough and $\delta'\in(0,1)$ that will be suitably chosen we consider the new network $\Gamma'_*(G)$ as before having the same edges of $\Gamma_*(G)$ except for those connected to endpoints which are extended by a length equal to $\delta'$, and we take $\Omega'$ equal to the $\delta$-tubular neighborhood of $\Gamma_*'(G)$, orthogonally truncated at endpoints. By assumptions, since $D$ has boundary of class $C^1$, for almost every $\delta',\delta$ suitably small, $\partial \Omega'$ intersects $\partial D$ transversely. Therefore, choosing any such  $\delta',\delta$, the set $\Omega\eqdef \Omega' \cap {\rm int\,}(D)$ is an open set with Lipschitz boundary. We define the desired Caccioppoli partition $\widetilde{\mathbf{E}}=(\widetilde E_1,\widetilde E_2,\widetilde E_3)$ of $\Omega$ by setting $\widetilde E_i\eqdef F_i \cap \Omega'$ for any $i=1,2,3$.

It remains to exhibit a local paired calibration for $\widetilde{\mathbf{E}}$. Along any edge that is not connected to an endpoint, we locally perform the same construction of \cref{fig:simple-minimal} for the network $\Gamma'_*$. More precisely, at any midpoint $M_{ij}$ between 
any two triple junctions 
$O^i$ and $O^j$ connected by an edge $L$, we consider two lines through $M_{ij}$ forming an angle of $30$ degrees with the segment connecting $O^i$ and $O^j$. By the choice of $\delta$, such lines intersect $\partial\Omega'$ at points whose projections along the direction determined by $L$ lie in the interior of $L$. Hence such lines divide $\Omega'$ into open regions where we can set $\Psi_{12}, \Psi_{23}, \Psi_{31}$ to be the suitable rotation of $\pm\frac{2\pi}{3}$
of the vector $(0,1)$ like done in the simpler case in \cref{fig:simple-minimal}.

The only issue that we need to check is the fact that a choice of these fields is coherent along an arbitrary cycle of the network, see \cref{fig:campi-vettoriali}. Since we proved that a cycle in $\Gamma_*$ is a hexagon, this is easily established by directly prescribing a choice of $\Psi_{12}, \Psi_{23}, \Psi_{31}$, depending on the preassigned $\widetilde{E}_1', \widetilde{E}_2', \widetilde{E}_3'$ as done before, along a cycle as in \cref{fig:campi-vettoriali}, which form a partition of $\Omega'$. Up to relabeling, we have that $\widetilde E'_i \cap {\rm int\,}(D) = \widetilde E_i$ for $i=1,2,3$. Hence taking pointwise restrictions on $D$ of the fields defining the calibration for $\widetilde{\mathbf{E}}'$ on $\Omega'$, we get a calibration for $\widetilde{\mathbf{E}}$ on $\Omega$.

\begin{figure}[H]
    \centering
\input{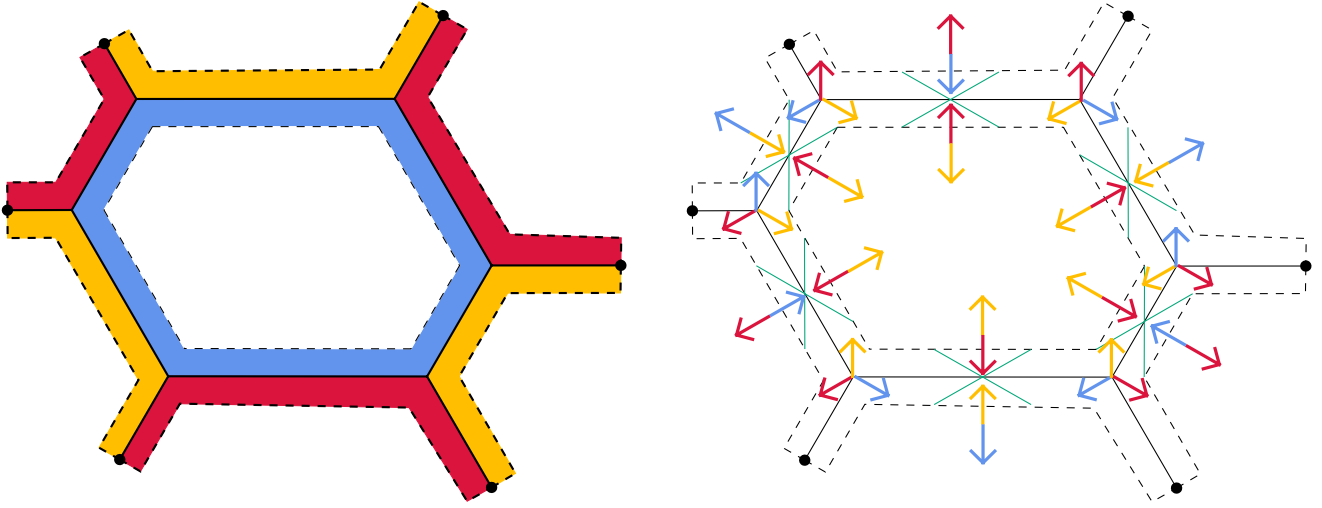}
    \caption{Left: Partition $\widetilde{\mathbf{E}}'=(\widetilde{E}_1', \widetilde{E}_2', \widetilde{E}_3')$, in yellow the set $\widetilde{E}_1'$, in red $\widetilde{E}_2'$ and in blue $\widetilde{E}_3'$. Right: The three fields:
    $\Psi_{12}$ in blue, $\Psi_{23}$ in yellow
    and $\Psi_{31}$ in red.}
    \label{fig:campi-vettoriali}
\end{figure}
\end{proof}

\begin{rem}\label{rem:LocalitaRisultatoPartizioni}
In the construction in \cref{esibizione-calibrazione} of a neighborhood relative to $D$ of a minimal network $\Gamma_*$ together with a Caccioppoli partition minimizing for $\mathcal{P}$, it is \emph{not} possible to choose the neighborhood arbitrarily large. More precisely, recall that in the proof of \cref{esibizione-calibrazione}, $\Omega$ is essentially given by a tubular neighborhood of $\Gamma_*$ of width $\delta<\sqrt{3}d/8$ where $d$ is the length of the shortest edge, intersected with the given domain $D$. Here we construct an example showing that for a choice of a bigger $\delta$, the partition associated to a minimal network $\Gamma_*$ is no longer a minimizer for $\mathcal{P}$ in $\Omega$.

Consider the minimal network $\Gamma_*$ depicted in \cref{fig:controesempio} on the left, together with a $\delta$-tubular neighborhood (orthogonally truncated at endpoints) $\Omega$ and the associated partition
$\mathbf{E}=(E_1,E_2,E_3)$, for $\delta$ to be chosen (comparing to the notation of \cref{esibizione-calibrazione}, here we can take $D$ equal to a suitable $C^1$ topological disk containing $\Omega$).
In this case $\Gamma_*$ is composed of five curves
joining at two triple junction whose distance equals $d$, and we assume that the four edges connected to endpoints have length strictly bigger than $d$.
We consider a comparison partition $\mathbf{F}$ (not induced by a network!) in the class $\mathcal{A}$ of partitions having the same trace of $\mathbf{E}$ on the boundary, as depicted in \cref{fig:controesempio} on the right. The interfaces determining the partition $\mathbf{F}$ are obtained by deleting the central curve of $\Gamma_*$, by shortening the four edges connected to endpoints of the same amount, and then by joining with two horizontal segments the four new endpoints of the shortened edges.
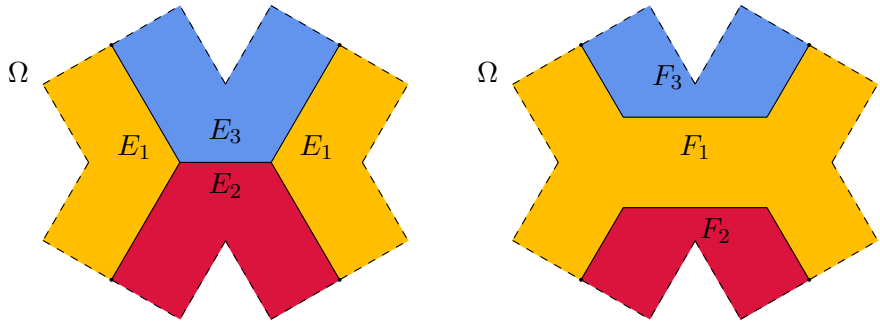
\begin{figure}[H]
    \centering
       \begin{tikzpicture}[scale=0.3]
\filldraw[amber]
(-6,3.45)--(-3,5.19)--(0,0)--(-3,-5.19)--(-6,-3.45)--(-4,0)--(-6,3.45);
\filldraw[amber]
(10,3.45)--(7,5.19)--(4,0)
--(7,-5.19)--(10,-3.45)--
(8,0)--(10,3.45);
\filldraw[cornflowerblue]
(-3,5.19)--(0,0)--(4,0)--(7,5.19)--
(4,6.93)--(2,3.46)--(0,6.93);
\filldraw[crimson]
(-3,-5.19)--(0,0)--(4,0)--(7,-5.19)--
(4,-6.93)--(2,-3.46)--(0,-6.93);
\draw[dashed]
(2,-3.46)--(4,-6.93)
(2,-3.46)--(0,-6.93)
(2,3.46)--(4,6.93)
(2,3.46)--(0,6.93)
(-6,3.45)--(0,6.93)
(-6,-3.45)--(0,-6.93)
(-4,0)--(-6,3.45)
(-4,0)--(-6,-3.45)
(4,6.93)--(10,3.45)
(4,-6.93)--(10,-3.45)
(8,0)--(10,3.45)
(8,0)--(10,-3.45);
\draw
(0,0)--(-3,5.19)
(0,0)--(-3,-5.19)
(0,0)--(4,0)
(4,0)--(7,5.19)
(4,0)--(7,-5.19);
\fill[black](-3,5.19) circle (2.5pt);  
\fill[black](-3,-5.19)circle (2.5pt);  
\fill[black](7,5.19) circle (2.5pt);  
\fill[black](7,-5.19) circle (2.5pt); 
\path
(-8,4) node[right]{$\Omega$}
(-2,-0.3) node[above]{$E_1$}
(6,-0.3) node[above]{$E_1$}
(2,-2) node[above]{$E_2$}
(2,0.4) node[above]{$E_3$};
\end{tikzpicture}\quad\quad
\begin{tikzpicture}[scale=0.3]
\filldraw[amber]
(-6,3.45)--(-3,5.19)--(-1.15,2)--(5.15,2)--
(7,5.19)--(10,3.45)--
(8,0)--(10,-3.45)--(7,-5.19)--(5.15,-2)--
(-1.15,-2)--(-3,-5.19)--(-6,-3.45)--(-4,0)--(-6,3.45);
\filldraw[cornflowerblue]
(-3,5.19)--(-1.15,2)--(5.15,2)--(7,5.19)--
(4,6.93)--(2,3.46)--(0,6.93);
\filldraw[crimson]
(-3,-5.19)--(-1.15,-2)--(5.15,-2)--(7,-5.19)--
(4,-6.93)--(2,-3.46)--(0,-6.93);
\draw[dashed]
(2,-3.46)--(4,-6.93)
(2,-3.46)--(0,-6.93)
(2,3.46)--(4,6.93)
(2,3.46)--(0,6.93)
(-6,3.45)--(0,6.93)
(-6,-3.45)--(0,-6.93)
(-4,0)--(-6,3.45)
(-4,0)--(-6,-3.45)
(4,6.93)--(10,3.45)
(4,-6.93)--(10,-3.45)
(8,0)--(10,3.45)
(8,0)--(10,-3.45);
\draw[black]
(-1.15,2)--(-3,5.19)
(-1.15,-2)--(-3,-5.19)
(-1.15,2)--(5.15,2)
(-1.15,-2)--(5.15,-2)
(5.15,2)--(7,5.19)
(5.15,-2) --(7,-5.19);
\fill[black](-3,5.19) circle (2.5pt);  
\fill[black](-3,-5.19)circle (2.5pt);  
\fill[black](7,5.19) circle (2.5pt);  
\fill[black](7,-5.19) circle (2.5pt); 
\path
(-8,4) node[right]{$\Omega$}
(2,-0.3) node[above]{$F_1$}
(2.9,-4) node[above]{$F_2$}
(0.8,2.8) node[above]{$F_3$};
\end{tikzpicture}
    \caption{Partitions $\mathbf{E}$ and $\mathbf{F}$ in \cref{rem:LocalitaRisultatoPartizioni}.}
    \label{fig:controesempio}
\end{figure}
We claim that if $\delta>\frac{\sqrt{3}}{4}d$, then $\mathbf{F}$ can be chosen so that $\mathcal{P}(\mathbf{F})<\mathcal{P}(\mathbf{E})$, giving an upper bound on the width of the tubular neighborhood where the partition induced by $\Gamma_*$ may be minimizing for $\mathcal{P}$.

Indeed, let $a$ be half of the length of one of the horizontal edges of the interfaces of $\mathbf{F}$ and let $b$ be the length of the deleted portion of one of the four external edges of $\Gamma_*$ (see \cref{fig:ContoControesempio}). By symmetry, if $\tfrac{d}{2}+2b>2a$ then $\mathcal{P}(\mathbf{F})<\mathcal{P}(\mathbf{E})$. Letting $h$ be the distance between the horizontal edge of $\Gamma_*$ and the horizontal part of the interfaces of $\mathbf{F}$, then $b=\tfrac{2}{\sqrt{3}}h$ and $a=\tfrac{d}{2}+\tfrac{b}{2}=\tfrac{d}{2}+\tfrac{h}{\sqrt{3}}$. Hence the condition $\tfrac{d}{2}+2b>2a$ is equivalent to $h>\tfrac{\sqrt{3}}{4}d$. Hence, whenever $\delta>\tfrac{\sqrt{3}}{4}d$, then such a partition $\mathbf{F}$ satisfying $\mathcal{P}(\mathbf{F})<\mathcal{P}(\mathbf{E})$ can be constructed within $\Omega$.
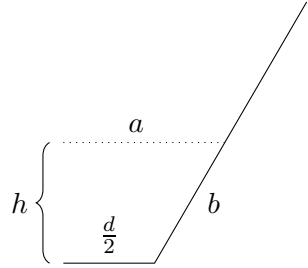
\begin{figure}[H]
\begin{center}
		\begin{tikzpicture}[scale=0.8]
		\draw
		(-1.5,0)--(0,0)to[out=60, in=240, looseness=0.5](2.5,4.33);
		\draw[dotted]
		(-1.5,2)--(1.155,2);
		\path[font=\normalsize]
		(-0.75,0)node[above]{$\frac{d}{2}$};
		\path[font=\normalsize]
		(0.7,1)node[right]{$b$};
		\path[font=\normalsize]
		(-0.3,2)node[above]{$a$};
		\draw [decorate,decoration={brace,amplitude=5pt,raise=1ex}]
        (-1.5,0) -- (-1.5,2)
        node[midway,xshift=-1.5em]{$h$};
		\end{tikzpicture}
	\end{center}
    \caption{Continuous lines: Upper-right quarter of $\Gamma_*$ in \cref{rem:LocalitaRisultatoPartizioni}. Dotted line: half of the upper horizontal edge of the interfaces of $\mathbf{F}$ in \cref{rem:LocalitaRisultatoPartizioni}.}
    \label{fig:ContoControesempio}
\end{figure}
\end{rem}

\bibliographystyle{plain}
\addcontentsline{toc}{section}{References}
\bibliography{references} 
\end{document}